\title{Totally aspherical parameters for Cherednik algebras}
\author{Ivan Losev}
\address{Department
of Mathematics, Northeastern University, Boston MA 02115 USA}
\email{i.loseu@neu.edu}
\thanks{MSC 2010: 16G99}
\newcommand{\h}{\mathfrak{h}}
\renewcommand{\a}{\mathfrak{a}}
\newcommand{\OCat}{\mathcal{O}}
\newcommand{\HC}{\mathfrak{H}}
\newcommand{\C}{\mathbb{C}}
\newcommand{\KZ}{\operatorname{KZ}}
\newcommand{\Hom}{\operatorname{Hom}}
\newcommand{\End}{\operatorname{End}}
\newcommand{\Irr}{\operatorname{Irr}}
\newcommand{\Z}{\mathbb{Z}}
\newcommand{\quo}{/\!/}
\newcommand{\GL}{\operatorname{GL}}
\newcommand{\A}{\mathcal{A}}
\newcommand{\gr}{\operatorname{gr}}
\newcommand{\g}{\mathfrak{g}}
\newcommand{\param}{\mathfrak{p}}
\newcommand{\VA}{\operatorname{V}}
\newcommand{\red}{/\!/\!/}
\newtheorem{Thm}{Theorem}[section]
\newtheorem{Prop}[Thm]{Proposition}
\newtheorem{Cor}[Thm]{Corollary}
\newtheorem{Lem}[Thm]{Lemma}
\theoremstyle{definition}
\newtheorem{Rem}[Thm]{Remark}
\numberwithin{equation}{section}
\begin{document}
\begin{abstract}
We introduce the notion of a totally aspherical parameter for a Rational Cherednik algebra.
We get an explicit construction of the projective object defining the KZ functor for such
parameters. We establish the existence of sufficiently many totally aspherical
parameters for the groups $G(\ell,1,n)$.
\end{abstract}
\maketitle
\section{Introduction}
Let $W$ be a complex reflection group and $\h$ be its reflection representation.
The Rational Cherednik algebra $H_c(W)$ is a filtered deformation of the skew-group
ring $S(\h\oplus\h^*)\#W$ depending on a parameter $c$ that is a collection of complex numbers.
Inside $H_c(W)$, there is  a so called {\it spherical subalgebra} $eH_c(W)e$, where $e\in \C W$
is the averaging idempotent, that deforms $S(\h\oplus \h^*)^W$. The parameter $c$ is called
{\it spherical} if $H_c(W)$ and $e H_c(W) e$ are Morita equivalent (via the bimodule $H_c(W)e$).
The parameter $c$ is called {\it aspherical} if it is not spherical. The goal of this paper
is to investigate parameters that are ``as aspherical as possible''.

The algebra $H_c(W)$ has a triangular decomposition, $H_c(W)=S(\h^*)\otimes \C W\otimes S(\h)$
that is analogous to the triangular decomposition of the universal enveloping algebra of
a semisimple Lie algebra. Thanks to this decomposition, it makes sense to consider  the category $\OCat$
for $H_c(W)$: by definition, the category $\OCat_c(W)$ consists of all $H_c(W)$-modules
$M$ that are finitely generated over $S(\h^*)$ and have locally nilpotent action of
$\h$.

We say that $c$ is {\it totally aspherical}
if, for $M\in \OCat_c(W)$, the following two conditions are equivalent:
\begin{itemize}
\item[(i)] $M$ is torsion as an $S(\h^*)$-module.
\item[(ii)] $eM=0$.
\end{itemize}
We will see below, Proposition \ref{Prop:simplicity},
that $c$ is totally aspherical if and only if the algebra $eH_c(W)e$ is simple.

Let us point out that (ii) always implies (i). So, informally, $c$ is totally  aspherical
if the multiplication by $e$ kills as many modules in $\OCat_c(W)$ as it theoretically can. On the other hand,
there are totally aspherical parameters that are spherical. Those are the parameters
where there are no  $S(\h^*)$-torsion modules in $\OCat_c(W)$. But this case is not interesting:
the category $\OCat_c(W)$ is semisimple.

For example, we can take $W=\mathfrak{S}_n$ (the symmetric group). Here $c$ is a single complex number
and we will show that all parameters in the interval $(-1,0)$ are totally aspherical. More generally,
we will establish the existence of sufficiently many (in a suitable sense) of totally aspherical
parameters for $W=G(\ell,1,n):=\mathfrak{S}_n\ltimes (\Z/\ell\Z)^n$.

We apply the notion of the total asphericity to relate two remarkable modules in $\OCat_c(W)$:
the projective object $P_{KZ}$ and what we call the Harish-Chandra module.

One can construct objects in $\OCat_c(W)$ as follows. We have the induction functor
$\Delta_c: S(\h)\#W\operatorname{-mod}_{fd,ln}\rightarrow \OCat_c(W)$ from the category
of the finite dimensional $S(\h)\#W$-modules with locally nilpotent $\h$-action, $\Delta_c(M):= H_c(W)\otimes_{S(\h)\#W}M$.
For example, take an irreducible $W$-module $\lambda$, it can be viewed as an $S(\h)\#W$-module
with zero action of $\h$. The object $\Delta_c(\lambda)\in \OCat_c(W)$ is called a
Verma module. It has a unique simple quotient to be denoted by $L_c(\lambda)$; the objects
$L_c(\lambda)$ form a complete collection of simples in $\OCat_c(W)$.
Another interesting example is for $M=S(\h)/(S(\h)^W_+)$, where $S(\h)^W_+$
is the ideal in $S(\h)^W$ of all polynomials without constant term and $(S(\h)^W_+)$ stands
for the ideal in $S(\h)$ generated by $S(\h)^W_+$. Recall that $M$ is a graded module
isomorphic to the regular representation of $W$. The module $\HC_c:=\Delta_c(M)$
will be called the {\it Harish-Chandra module}, by analogy with a D-module
on a semisimple Lie algebra, see \cite{HoKas}. A question one can ask is whether
this natural module has any nice categorical properties, for example,
whether it is projective.

On the other hand, in \cite{GGOR}, the authors introduced a crucial tool to study
the category $\OCat_c(W)$, the KZ (Knizhnik-Zamolodchikov) functor. This is a functor
$\KZ:\OCat_c(W)\rightarrow \mathcal{H}_q(W)\operatorname{-mod}$, where
we write $\mathcal{H}_q(W)$ for the Hecke algebra of the group $W$ corresponding
to a parameter $q$ computed from $c$ (we do not need a precise formula). The functor $\KZ$ is exact, on the level
of vector spaces, to a module $M\in \OCat_c(W)$ this functor assigns the fiber
$M_x$ of $M$ at a generic point $x\in \h$. So $\KZ$ is given by $\Hom_{\OCat_c(W)}(P_{\KZ,c},\bullet)$,
where $P_{\KZ,c}$  is a projective object equipped with an epimorphism $\mathcal{H}_q(W)\twoheadrightarrow
\End_{\OCat_c(W)}(P_{\KZ,c})^{opp}$ (which is an isomorphism if and only if $\dim \mathcal{H}_q(W)=|W|$).
The object $P_{\KZ,c}$ is decomposed as
\begin{equation}\label{eq:PKZ_decomp}\bigoplus_{\lambda\in \Irr(W)}P_c(\lambda)^{\mathsf{rk}L_c(\lambda)}.\end{equation}
Here we write $P_c(\lambda)$ for the projective cover of $L_c(\lambda)$ and $\mathsf{rk}$
stands for the generic rank over $S(\h^*)$ (=the dimension of a general fiber).
The object $P_{KZ,c}$ has very nice categorical properties, for example, its irreducible
summands are precisely the indecomposable projectives that are also injectives. On the other
hand, it is difficult to construct $P_{KZ,c}$ explicitly.

Around 2005, Ginzburg, and, independently, Rouquier asked the question when $P_{KZ,c}$ is isomorphic to $\mathfrak{H}_c$
(unpublished).
In this paper, we establish the following criterium for
$P_{KZ,c}\cong \mathfrak{H}_c$.

\begin{Thm}\label{Thm:PKZ_vs_HC}
The following two conditions are equivalent:
\begin{itemize}
\item[(a)] $P_{KZ,c}\cong \HC_c$.
\item[(b)] $c$ is totally aspherical.
\end{itemize}
\end{Thm}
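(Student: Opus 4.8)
The plan is to deduce the equivalence from two facts about the Harish-Chandra module: an adjunction formula for $\Hom_{\OCat_c(W)}(\HC_c,-)$, which settles (a)$\Rightarrow$(b) immediately, and a comparison of standard (Verma) filtrations, which handles (b)$\Rightarrow$(a).

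First I would record that $S(\h)/(S(\h)^W_+)\cong(S(\h)\#W)\otimes_{S(\h)^W\#W}\C$ as $S(\h)\#W$-modules (with $\C$ the trivial module), so, since $H_c(W)$ is free over $S(\h)\#W$, one has $\HC_c\cong H_c(W)\otimes_{S(\h)^W\#W}\C$. Tensor--hom adjunction, together with $eM=M^W$, then gives a natural isomorphism
\[
\Hom_{\OCat_c(W)}(\HC_c,M)=\{v\in eM:\ S(\h)^W_+v=0\}.
\]
Since $S(\h)^W_+$ acts locally nilpotently on every object of $\OCat_c(W)$, the right-hand side is nonzero precisely when $eM\neq 0$. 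Hence if $\HC_c\cong P_{KZ,c}$ the left-hand side is $\KZ(M)$, which by \cite{GGOR} vanishes exactly when $M$ is $S(\h^*)$-torsion; comparing the two vanishing conditions gives $eM=0\iff M$ torsion, i.e.\ $c$ is totally aspherical. This proves (a)$\Rightarrow$(b).

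For (b)$\Rightarrow$(a) I would compare Verma multiplicities. On one hand $\HC_c=\Delta_c\bigl(S(\h)/(S(\h)^W_+)\bigr)$ carries a filtration by Verma modules in which $\Delta_c(\mu)$ occurs $\dim\mu$ times: filter the coinvariant module over $S(\h)\#W$ so that $\h$ acts by zero on the subquotients, note the coinvariant algebra is $\C W$ as a $W$-module, and apply the exact functor $\Delta_c$; this needs no assumption on $c$. On the other hand, BGG reciprocity and additivity of the generic rank give
\[
[P_{KZ,c}:\Delta_c(\mu)]=\sum_{\lambda\in\Irr(W)}\bigl(\mathsf{rk}\,L_c(\lambda)\bigr)[\Delta_c(\mu):L_c(\lambda)]=\mathsf{rk}\,\Delta_c(\mu)=\dim\mu .
\]
Because the decomposition matrix $\bigl([\Delta_c(\mu):L_c(\lambda)]\bigr)$ is unitriangular, a projective object of $\OCat_c(W)$ is determined up to isomorphism by its Verma multiplicities; so (b)$\Rightarrow$(a) reduces to the single assertion that $\HC_c$ is projective whenever $c$ is totally aspherical, and then automatically $\HC_c\cong P_{KZ,c}$.

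Proving this projectivity is, I expect, the main obstacle, and it is the point where Proposition~\ref{Prop:simplicity} enters. Using the presentation $\HC_c=H_c(W)\otimes_{S(\h)^W\#W}\C$, the freeness of $H_c(W)$ over $S(\h)\#W$ and (by Chevalley's theorem) of $S(\h)\#W$ over $S(\h)^W\#W$, and the semisimplicity of $\C W$, one reduces the relevant obstruction group to an Ext over the polynomial algebra $S(\h)^W$:
\[
\operatorname{Ext}^1_{\OCat_c(W)}(\HC_c,M)\cong\operatorname{Ext}^1_{S(\h)^W}\bigl(\C,eM\bigr),\qquad M\in\OCat_c(W).
\]
Thus $\HC_c$ is projective if and only if every module $eM$ is $S(\h)^W$-injective. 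I would establish the latter from the simplicity of $eH_c(W)e$: the $eH_c(W)e$-modules $eM$ all lie in the spherical category and are supported at the origin of $\operatorname{Spec}S(\h)^W$, and simplicity of $eH_c(W)e$ precludes nonzero finite-dimensional $eH_c(W)e$-subquotients, which is what forces the vanishing of $\operatorname{Ext}^1_{S(\h)^W}(\C,eM)$. (With projectivity in hand one also sees directly that the indecomposable summands of $\HC_c$ are the $P_c(\lambda)$ with $eL_c(\lambda)\neq 0$, hence — by total asphericity — with $\mathsf{rk}\,L_c(\lambda)>0$, consistently with $\HC_c\cong P_{KZ,c}$.)
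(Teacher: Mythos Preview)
Your argument for (a)$\Rightarrow$(b) is correct and essentially identical to the paper's.

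For (b)$\Rightarrow$(a), your reduction to projectivity of $\HC_c$ via Verma multiplicities is valid (the paper uses a similar Verma-filtration count in Remark~\ref{Rem:not_proj}), and your adjunction $\operatorname{Ext}^1_{\OCat_c}(\HC_c,M)\cong\operatorname{Ext}^1_{S(\h)^W}(\C,eM)$ is correct. But the final step has a genuine gap: you assert that simplicity of $eH_ce$, i.e.\ the absence of finite-dimensional $eH_ce$-subquotients, forces $\operatorname{Ext}^1_{S(\h)^W}(\C,eM)=0$. This does not follow as stated. The Ext group lives in $S(\h)^W$-modules, not $eH_ce$-modules; since $S(\h)^W_+$ acts locally nilpotently, $eM$ is set-theoretically supported at the origin of $\operatorname{Spec}S(\h)^W$ and has plenty of finite-dimensional $S(\h)^W$-subquotients (its socle, for instance), so the absence of finite-dimensional $eH_ce$-modules says nothing directly about the depth of $eM$ over $S(\h)^W$. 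What you actually need is that $eM$ is graded-injective over $S(\h)^W$, equivalently that $(eM)^\vee$ is free over $S(\h)^W$; simplicity alone does not give this.

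The paper supplies exactly this missing ingredient, and not via Proposition~\ref{Prop:simplicity}. It first proves (Lemma~\ref{Lem:freeness}) that every object of $\OCat_c^{sph}$ is free over $S(\h^*)^W$ when $c$ is totally aspherical, by a Cohen--Macaulay argument. Duality then transports this to freeness of $(eM)^\vee$ over $S(\h)^W$, yielding (Corollary~\ref{Cor:invar}) the exact count $\dim(eM)^{S(\h)^W_+}=\mathsf{rk}(eM)$. From here the paper bypasses your Ext/Verma-multiplicity route: since $\dim\Hom(\HC_c,M)=\mathsf{rk}(eM)=\mathsf{rk}(M)=\dim\Hom(P_{\KZ,c},M)$ for every $M$, and the right-hand functor is exact, the left-hand one is too, whence $\HC_c$ is projective and isomorphic to $P_{\KZ,c}$. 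Proposition~\ref{Prop:simplicity} is established separately and plays no role in the proof of Theorem~\ref{Thm:PKZ_vs_HC}.
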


\begin{Cor}\label{Cor:type_A}
For $W=\mathfrak{S}_n$, we have $P_{KZ,c}\cong \HC_c$ for $c\in (-1,0)$.
\end{Cor}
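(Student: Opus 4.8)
The plan is to deduce this from Theorem~\ref{Thm:PKZ_vs_HC}: there $(a)\Leftrightarrow(b)$, so it is enough to show that every $c\in(-1,0)$ is totally aspherical for $W=\mathfrak S_n$, and by Proposition~\ref{Prop:simplicity} this is the same as the simplicity of the spherical subalgebra $eH_c(\mathfrak S_n)e$. Unwinding the definition, I must show that for $c\in(-1,0)$ every $S(\h^*)$-torsion $M\in\OCat_c(\mathfrak S_n)$ has $eM=0$. Since torsion modules form a Serre subcategory and $M\mapsto eM=M^{\mathfrak S_n}$ is exact, it suffices to treat the torsion simples $M=L_c(\lambda)$, and $eL_c(\lambda)=0$ exactly when the trivial representation is not a constituent of the $\mathfrak S_n$-module $L_c(\lambda)$. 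By Chevalley's theorem $S(\h^*)\cong S(\h^*)^{\mathfrak S_n}\otimes\mathcal H$ with $\mathcal H\cong\C\mathfrak S_n$, so the trivial representation occurs, with infinite multiplicity, in $\Delta_c(\lambda)=S(\h^*)\otimes\lambda$; since $e$ is exact, $eL_c(\lambda)=e\Delta_c(\lambda)/e(\operatorname{rad}\Delta_c(\lambda))$. Hence the whole matter comes down to showing: for $c\in(-1,0)$ and $L_c(\lambda)$ torsion, the radical $\operatorname{rad}\Delta_c(\lambda)$ contains the entire trivial-isotypic component of $\Delta_c(\lambda)$.

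To see this I would use the description of $\OCat_c(\mathfrak S_n)$ available in the range $c\in(-1,0)$. If $\OCat_c(\mathfrak S_n)$ is semisimple there are no torsion modules and nothing is to prove (this is the degenerate case of the introduction), so one may assume $c=-r/m$ with $\gcd(r,m)=1$ and $2\le m\le n$; then $q=\exp(2\pi\sqrt{-1}\,c)$ is a primitive $m$-th root of unity lying in the fundamental alcove for the affine symmetric group, the category $\OCat_c$ is equivalent (Rouquier) to the module category of the $q$-Schur algebra $S_q(n,n)$, equivalently is governed by the $\widehat{\mathfrak{sl}}_m$-action on level-one Fock space, and here the linkage follows the cleanest possible, ``principal block'' pattern. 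From this picture one reads off which $\lambda$ give a torsion $L_c(\lambda)$ (via the $m$-core/support combinatorics of Bezrukavnikov--Etingof, Shan--Vasserot and Wilcox, so that $\mathsf{rk}\,L_c(\lambda)=\sum_\nu(d^{-1})_{\lambda\nu}\dim\nu=0$), and one checks that for such $\lambda$ the submodule $\operatorname{rad}\Delta_c(\lambda)$ is $\Delta$-filtered. Writing $[L_c(\lambda)]=\sum_\nu(d^{-1})_{\lambda\nu}(t)\,[\Delta_c(\nu)]$ in the graded Grothendieck group and pairing with the trivial representation, the required vanishing of the graded multiplicity of the trivial representation in $L_c(\lambda)$ becomes, after cancelling the common denominator $\prod_{i=2}^{n}(1-t^{i})$ of the series $[\Delta_c(\nu):\mathrm{triv}]$, the finite identity $\sum_\nu(d^{-1})_{\lambda\nu}(t)\,t^{\,h_c(\nu)-h_c(\lambda)}\,f_{\nu^{*}}(t)=0$ in $\Z[t]$, where $f_{\nu^{*}}(t)$ is the fake-degree polynomial of $\nu^{*}$. (For $n=3$, $c=-1/2$, this is exactly $\operatorname{rad}\Delta_{-1/2}((1^3))\cong\Delta_{-1/2}((3))$ together with the cancellation $f_{\mathrm{sgn}}(t)=t^{3}=t^{\,h_c((3))-h_c((1^3))}f_{\mathrm{triv}}(t)$.)

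The main obstacle is proving this last identity uniformly in $n$, $\lambda$ and $c\in(-1,0)$: it is not a formal consequence of $\mathsf{rk}\,L_c(\lambda)=0$ and genuinely uses that $c$ lies in the fundamental alcove. There are two routes. One is an induction on $n$ via the Bezrukavnikov--Etingof parabolic restriction functors $\operatorname{Res}_{\mathfrak S_\mu}$: torsion-ness of $M$ and the condition $eM=0$ are both detected after restriction to the proper parabolic subgroups $\mathfrak S_\mu\subsetneq\mathfrak S_n$ (for which $c$ stays in $(-1,0)$), so that only a single ``full-support'' case has to be treated at rank $n$. The other, and the one I would actually carry out, is to invoke the analysis of totally aspherical parameters for $G(\ell,1,n)$ developed elsewhere in this paper, specialized to $\ell=1$: the interval $(-1,0)$ is precisely the slice of cyclotomic parameters lying in the relevant fundamental region, and that general result yields simplicity of $eH_c(\mathfrak S_n)e$, hence the corollary, directly.
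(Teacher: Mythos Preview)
Your reduction to total asphericity via Theorem~\ref{Thm:PKZ_vs_HC} is correct, but from there you take a much harder road than necessary and do not actually close any of the three arguments you sketch. The paper's proof is a two-line application of Lemma~\ref{Lem:tot_asph_char}: condition~(*) there says $c$ is totally aspherical iff $e_{W'}M'=0$ for every parabolic $W'\subset\mathfrak S_n$ and every finite-dimensional $H_c(W')$-module $M'$. Parabolics in $\mathfrak S_n$ are products $\mathfrak S_{n_1}\times\cdots\times\mathfrak S_{n_k}$, so the whole statement reduces to a single external input (from \cite{BEG}): the unique finite-dimensional irreducible $H_c(\mathfrak S_m)$-module, which exists precisely when $c$ has denominator $m$, is annihilated by $e$ exactly when $c\in(-1,0)$.

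Your combinatorial route through decomposition numbers and fake-degree identities is not wrong in principle, but you yourself flag the key identity as unproved; this is working far harder than needed. Your ``first route'' via parabolic restriction is essentially Lemma~\ref{Lem:tot_asph_char}, but you stop just short of the decisive step: after the inductive reduction, what remains is precisely the finite-dimensional case for $\mathfrak S_m$ itself, and \emph{that} is the BEG result---which you never invoke. Your ``second route'' (specialize Theorem~\ref{Thm:cycl_main} to $\ell=1$) is not an independent alternative: for $\ell=1$ the slice algebras appearing in the proof of Theorem~\ref{Thm:cycl_main} are all type~$A$ spherical subalgebras $\A_\kappa(n_i)$, and their lack of finite-dimensional modules is again the BEG fact, so the specialization collapses to the paper's direct argument with extra packaging.
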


This paper is organized as follows. We start Section \ref{S_asph} by recalling some generalities on
the Rational Cherednik algebras and their categories $\mathcal{O}$. Then we investigate some properties
of $\OCat_c(W)$ for a totally aspherical parameter $c$ and use those properties to prove Theorem \ref{Thm:PKZ_vs_HC}. We finish the
section by recalling a general conjecture on the locus, where the equivalent conditions of Theorem
\ref{Thm:PKZ_vs_HC} hold.

In Section \ref{S_cycl} we study the case of the groups $W=G(\ell,1,n)$. We first prove
Corollary \ref{Cor:type_A}. Then we extend it to the groups $G(\ell,1,n)$ for $\ell>1$, Theorem \ref{Thm:cycl_main}.
The proof of this theorem is based on results from \cite{BL}.

{\bf Acknowledgements}. I would like to thank Raphael Rouquier who explained me a conjectural isomorphism
in Theorem \ref{Thm:PKZ_vs_HC}. I am also grateful  to  Gwyn Bellamy, Roman Bezrukavnikov, Pavel Etingof,
Victor Ginzburg and Iain Gordon for stimulating discussions. This work was supported by the NSF  under Grant  DMS-1161584.

\section{$P_{\KZ}$ vs Harish-Chandra module}\label{S_asph}
\subsection{Reminder on Cherednik algebras}
Let $W$ be a complex reflection group and $\h$  its reflection representation.
We write $S$ for the set of reflections in $W$. For $s\in S$ we choose
elements $\alpha_s\in \h^*, \alpha_s^\vee\in \h$ such that $s\alpha_s=\lambda_s s,
s\alpha_s^\vee=\lambda_s^{-1}\alpha_s^\vee$ with $\lambda_s\in \C\setminus \{1\}$
and $\langle \alpha_s,\alpha_s^\vee\rangle=2$.

Let $c:S\rightarrow \C$
be a function constant on the conjugacy classes.
By definition, \cite[1.4]{EG}, \cite[3.1]{GGOR},
the Rational Cherednik algebra  $H_c(=H_c(W)=H_c(W,\h))$ is the quotient of the skew-group algebra $T(\h\oplus \h^*)\# W$
by the following relations:
$$[x,x']=[y,y']=0,  [y,x]=\langle y,x\rangle-\sum_{s\in S}c(s)\langle x,\alpha_s^\vee\rangle\langle y,\alpha_s\rangle s, \quad x,x'\in \h^*, y,y'\in \h.$$

Let us recall some structural results about $H_c$.
The algebra $H_c$ is filtered (say, with $\deg \h=\deg \h^*=1, \deg W=0$), and its associated graded is $S(\h\oplus \h^*)\#W$, \cite[1.2]{EG}. This yields
the triangular decomposition $H_c=S(\h^*)\otimes \C W\otimes S(\h)$, \cite[3.2]{GGOR}.

Consider
the element $\delta:=\prod_{s}\alpha_s^{\ell_s}\in S(\h^*)^W$, where $\ell_s$
stands for the order of $s$ so that the element $\delta$ is $W$-invariant. Since $\operatorname{ad}\delta$
is locally nilpotent, the quotient $H_c[\delta^{-1}]$ is well-defined. There is a natural isomorphism
$H_c[\delta^{-1}]\cong D(\h^{reg})\#W$, \cite[1.4]{EG},\cite[5.1]{GGOR}.

Consider the averaging idempotent $e:=|W|^{-1}\sum_{w\in W}w\in \C W\subset H_c$. The {\it spherical subalgebra} of $H_c$,
by definition, is $eH_ce$. When the algebras $eH_ce$ and $H_c$
are Morita equivalent (automatically, via the bimodule $H_ce$), we say that the parameter $c$ is {\it spherical}.
Otherwise, we say that $c$ is aspherical.

There is an {\it Euler element} $h\in H_c$ satisfying $[h,x]=x, [h,y]=-y, [h,w]=0$. So the operator $[h,\cdot]$
defines a grading on $H_c$ to be called the Euler grading. The Euler element is constructed as follows.
Pick a basis $y_1,\ldots,y_n\in \h$ and let $x_1,\ldots,x_n\in \h^*$ be the dual basis.  Then
\begin{equation}\label{eq:Euler}
h=\sum_{i=1}^n x_i y_i+\frac{n}{2}-\sum_{s\in S} \frac{2c(s)}{1-\lambda_s}s.\end{equation}

\subsection{Reminder on categories $\mathcal{O}$}\label{SS_cat_O}
Following \cite[3.2]{GGOR}, we consider the full subcategory $\OCat_c(W)$ of $H_c\operatorname{-mod}$
consisting of all modules $M$ that are finitely generated over $S(\h^*)$ and with locally nilpotent
action of $\h$. Equivalently, we can require that the modules in $\OCat_c(W)$ are finitely generated
over $H_c$ (and $\h$ still acts locally nilpotently). In the category $\OCat_c(W)$ we have analogs of Verma modules that
are indexed by the  irreducible representations $\lambda$ of $W$. By definition, the Verma module indexed
 by $\lambda$ is $\Delta_c(\lambda):=H_c\otimes_{S(\h)\#W}\lambda$, where $\h$ acts by $0$ on $\lambda$. It is easy to see that
this module  is in $\mathcal{O}_c(W)$. Using the Euler element, one shows that $\Delta_c(\lambda)$ has a unique simple
quotient to be denoted by $L_c(\lambda)$. The objects $L_c(\lambda)$ form a complete collection of simples
in $\OCat_c(W)$.

Often we drop $W$ from the notation and just write $\OCat_c$.
Let us note that all finite dimensional modules lie in $\OCat_c$ thanks to the presence of the Euler
element.

To a module  $M\in\OCat_c$ we can assign its associated variety $\VA(M)$ that, by definition, is the support of $M$
(as a coherent sheaf)  in $\h$. Clearly, $\VA(M)$ is a closed $W$-stable subvariety. Also
to a module $M$ in $\OCat_c$, one can assign its  rank, $\mathsf{rk}M$, the generic rank of
$M$ viewed as an  $S(\h^*)$-module.

The category $\OCat_c$ has enough projective objects, see, for example, \cite[Theorem 2.19]{GGOR}.
For $\lambda\in \operatorname{Irr}W$, let $P_c(\lambda)$ denote the projective
cover of $L_c(\lambda)$.  Following \cite{GGOR}, we consider the projective object $P_{\KZ}$ defined by
$$P_{\KZ}=\bigoplus_{\lambda\in \operatorname{Irr}W}P_c(\lambda)^{\mathsf{rk}L_c(\lambda)}.$$
We write $P_{\KZ,c}$ when we want to indicate the parameter.
The importance of this projective is that it defines a quotient functor from $\OCat_c$
to the category of modules over a suitable Hecke algebra.

We will also need the category $\OCat$ over $eH_ce$ (to be denoted by $\OCat_c^{sph}$), see \cite[Section 3]{GL}. By definition, this category consists of all finitely generated $eH_ce$-modules $N$ such that
\begin{enumerate}
\item $ehe=(=he=eh)$ acts on $N$ locally finitely.
\item the sum of the positive graded components of $eH_ce$ (with respect to the Euler grading) acts locally
nilpotently on $N$.
\end{enumerate}

The following lemma is a consequence of \cite[Proposition 3.2.1]{GL}.

\begin{Lem}\label{Lem:quot_to_spher}
The functor $M\mapsto eM$ is a quotient functor $\OCat_c\rightarrow \OCat_c^{sph}$. The left adjoint and right inverse
is given by $N\mapsto H_ce\otimes_{eH_ce}N$.
\end{Lem}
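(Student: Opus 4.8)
The statement cites \cite[Proposition 3.2.1]{GL}, so the plan is to extract exactly what we need from that reference and assemble it into the two assertions: (1) $M \mapsto eM$ is a quotient (Serre localization) functor $\OCat_c \to \OCat_c^{sph}$, and (2) $N \mapsto H_c e \otimes_{eH_c e} N$ is its left adjoint and a right inverse (i.e. the composite $e(H_c e \otimes_{eH_c e} N) \cong N$ canonically). I would first check that the two functors land in the claimed categories. For $M \in \OCat_c$, the module $eM$ is finitely generated over $eS(\h^*)e \subset eH_c e$ (in fact over $S(\h^*)^W$, which is module-finite over its image), the Euler element acts locally finitely since it does so on $M$ and $e$ is central in $\C W$, and the positive part of the Euler grading on $eH_c e$ acts locally nilpotently because $\h$ acts locally nilpotently on $M$; hence $eM \in \OCat_c^{sph}$. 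Conversely, for $N \in \OCat_c^{sph}$, the module $H_c e \otimes_{eH_c e} N$ is finitely generated over $S(\h^*)$ using the triangular decomposition $H_c = S(\h^*) \otimes \C W \otimes S(\h)$ and local nilpotence of the positive Euler part on $N$, and $\h$ acts locally nilpotently on it; so it lies in $\OCat_c$.

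Next I would establish the adjunction. The pair $(H_c e \otimes_{eH_c e} -,\ e(-))$ is an adjoint pair at the level of all $H_c$-modules versus all $eH_c e$-modules simply because $e(H_c e \otimes_{eH_c e} N) = eH_c e \otimes_{eH_c e} N = N$ and $\Hom_{H_c}(H_c e \otimes_{eH_c e} N, M) = \Hom_{eH_c e}(N, eM)$ by $\otimes$-$\Hom$ adjunction together with $\Hom_{H_c}(H_c e, M) = eM$; this is a standard idempotent-recollement computation. Restricting to the subcategories $\OCat_c$ and $\OCat_c^{sph}$ keeps the adjunction since both functors preserve these subcategories. The unit $N \to e(H_c e \otimes_{eH_c e} N)$ is then an isomorphism, which is precisely the statement that $N \mapsto H_c e \otimes_{eH_c e} N$ is a right inverse to $M \mapsto eM$; equivalently, $e(-)$ is a retraction.

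Finally, to see that $e(-)$ is a \emph{quotient} functor (exact, and an equivalence modulo its kernel), I would note that $e(-)$ is exact because $e$ is idempotent, and that a functor possessing a fully faithful right inverse that is simultaneously its left adjoint is automatically a Serre quotient functor onto its target: the counit $e(H_c e \otimes_{eH_c e} eM) \to eM$ is an isomorphism for all $M$ (it equals the identity of $eM$), so the image of the right adjoint is all of $\OCat_c^{sph}$ up to isomorphism, and by the general theory (e.g. Gabriel) $\OCat_c^{sph}$ is identified with the quotient of $\OCat_c$ by the Serre subcategory $\{M : eM = 0\}$. The main obstacle is not any single deep computation but rather verifying cleanly that both functors genuinely preserve the finiteness and local-finiteness conditions defining $\OCat_c$ and $\OCat_c^{sph}$ — in particular that $H_c e \otimes_{eH_c e} N$ is finitely generated over $S(\h^*)$, which is where one must use the triangular decomposition together with condition (2) in the definition of $\OCat_c^{sph}$ — and I would reference \cite[Proposition 3.2.1]{GL} for exactly these points rather than redoing them.
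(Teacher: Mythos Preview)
The paper does not give its own proof; it simply records before the lemma that it ``is a consequence of \cite[Proposition 3.2.1]{GL}.'' Your proposal correctly spells out the standard idempotent--recollement argument behind that citation (well-definedness on the subcategories, $\otimes$--$\Hom$ adjunction, the unit $N\to e(H_ce\otimes_{eH_ce}N)$ being an isomorphism, and the Gabriel-type identification of the target with the Serre quotient by $\{M:eM=0\}$), so there is nothing to compare.
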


For $N\in \OCat_c^{sph}$, we define the generic rank, $\mathsf{rk}N$, as the generic rank of $N$
viewed as an $S(\h^*)^W$-module. Note that $\mathsf{rk} N=\mathsf{rk}\, eN$.

We have  naive duality functors $\OCat_c(W,\h)\xrightarrow{\sim} \OCat_{c^*}(W,\h^*), \OCat^{sph}_c(W,\h)
\xrightarrow{\sim}\OCat_{c^*}^{sph}(W,\h^*)$, where $c^*(s):=c(s^{-1})$, see \cite[4.2]{GGOR} for the former.
Namely, consider the isomorphism $\varphi: H_c(W,\h)\rightarrow H_{c^*}(W,\h^*)^{opp}$ that is
the identity on $\h,\h^*$ and maps $w$ to $w^{-1}$, $w\in W$. Let us note that $\varphi$ restricts
to an isomorphism $e H_c(W,\h)e\xrightarrow{\sim} (e H_{c^*}(W,\h^*)e)^{opp}$. For $M\in \OCat_c(W,\h)$,
the restricted dual $M^\vee$ (the sum of the duals of the generalized eigenspaces for $h$)
is a left $H_{c^*}(W,\h^*)$-module that lies in the category $\OCat$. The functor $M\mapsto M^\vee$
is the duality functor that we need. A duality between the spherical categories is defined in a similar
fashion.

\begin{Lem}\label{Lem:GK_preserv}
The duality $\OCat_c(W,\h)\xrightarrow{\sim} \OCat_{c^*}(W,\h^*)$ preserves the GK dimensions
and generic ranks. A similar claim holds for the spherical categories $\OCat$.
\end{Lem}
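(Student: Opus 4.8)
The plan is to deduce both assertions from one fact: the duality $M\mapsto M^\vee$ preserves the Euler-graded character of an object, up to an overall power of the grading variable, while the GK dimension and the generic rank are both read off from that character. So first I would record the character dictionary. Let $M\in\OCat_c(W,\h)$ and write $M=\bigoplus_a M_a$ for the decomposition into generalized eigenspaces of the Euler element $h$ of \eqref{eq:Euler}; each $M_a$ is finite-dimensional and the $a$ with $M_a\neq0$ lie in finitely many cosets of $\Z$. Since the Euler grading on $H_c$ is integral, the sum of the $M_a$ over a fixed $\Z$-coset is an $H_c$-submodule, so $M$ splits as a finite direct sum of such pieces; as GK dimension is the maximum over a direct sum and generic rank is additive, I may assume $M$ is concentrated in a single coset $a_0+\Z$. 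Then $M$ is finitely generated and graded over $S(\h^*)$, a polynomial ring on $n:=\dim\h$ generators of Euler degree $1$, so its character $\chi_M(z):=\sum_a(\dim M_a)z^a$ has the form $z^{a_0}q_M(z)/(1-z)^n$ with $q_M\in\Z[z^{\pm1}]$, and standard Hilbert-series considerations give $\dim\VA(M)=n-\operatorname{ord}_{z=1}q_M(z)$ and $\mathsf{rk}(M)=q_M(1)$; recall also that $\dim\VA(M)$ is the GK dimension of $M$. The same holds verbatim for $\OCat_{c^*}(W,\h^*)$ (note $\dim\h^*=n$), the polynomial ring there being $S((\h^*)^*)=S(\h)$, whose generators have Euler degree $1$ with respect to the Euler element $h'$ of $H_{c^*}(W,\h^*)$.

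The second step compares $\chi_M$ with $\chi_{M^\vee}$ through $\varphi$. The key point is that $\varphi(h)=h'+\kappa$ for a scalar $\kappa$: this follows directly from the explicit formula \eqref{eq:Euler} together with $c^*(s)=c(s^{-1})$ and the corresponding relation between the $\lambda$'s for the pairs $(W,\h)$ and $(W,\h^*)$, or, more conceptually, from the fact that $\varphi$ is an anti-isomorphism that is the identity on $\h\oplus\h^*$, so $\varphi(h)$ satisfies the same commutation relations as the Euler element $h'$ (while $\h$ and $\h^*$ interchange their roles of raising and lowering operators when one passes from $(W,\h)$ to $(W,\h^*)$), up to a central, hence scalar, correction. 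Consequently $h'$ acts on $M^\vee=\bigoplus_a(M_a)^*$ as the transpose of $h-\kappa$ acting on $M$, so its generalized $(a-\kappa)$-eigenspace is $(M_a)^*$; therefore $\chi_{M^\vee}(z)=z^{-\kappa}\chi_M(z)$, and in particular $q_{M^\vee}=q_M$. By the dictionary above, $M$ and $M^\vee$ have equal GK dimension and equal generic rank.

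For the spherical categories the argument is the same with $S(\h^*)$ replaced by $S(\h^*)^W$. By Chevalley--Shephard--Todd this is again a polynomial ring; its fundamental degrees $d_1,\dots,d_n$ coincide for $(W,\h)$ and $(W,\h^*)$, since the Poincar\'e series $|W|^{-1}\sum_{w\in W}\det(1-zw)^{-1}$ is unchanged under $w\mapsto w^{-1}$. For $N\in\OCat^{sph}_c$ one has $\chi_N(z)=q_N(z)/\prod_i(1-z^{d_i})$, with GK dimension the order of the pole of $\chi_N$ at $z=1$ and $\mathsf{rk}(N)=\lim_{z\to1}\chi_N(z)\prod_i(1-z^{d_i})$. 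Since $\varphi$ restricts to an isomorphism $eH_c(W,\h)e\xrightarrow{\sim}(eH_{c^*}(W,\h^*)e)^{opp}$ sending $ehe$ to $eh'e+\kappa$, the spherical duality changes $\chi_N$ only by a power of $z$, and both invariants are preserved.

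I expect the only genuinely delicate point to be the clean identification $\varphi(h)=h'+\kappa$, and, conceptually, keeping straight that the contravariant functor $M\mapsto M^\vee$ does \emph{not} reverse the Euler grading — the anti-isomorphism $\varphi$ already does, and the two reversals cancel. This is exactly why $\chi_{M^\vee}$ equals $\chi_M$ up to a monomial rather than up to $z\mapsto z^{-1}$; getting this wrong would (incorrectly) suggest that the duality inverts ranks and reflects GK dimensions about $n/2$.
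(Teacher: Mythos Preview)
Your proof is correct and follows essentially the same approach as the paper's: both arguments observe that the duality preserves the Euler-graded dimensions (up to an overall shift) and then read off the GK dimension and generic rank from the resulting Hilbert data. The paper is terser---it packages the shift by choosing a section $\sigma:\C/\Z\to\C$ and declaring $\dim M^\vee_n=\dim M_n$---whereas you make the mechanism explicit via the identity $\varphi(h)=h'+\kappa$ and the Hilbert-series formulas; your additional remarks on why the degrees of $S(\h^*)^W$ and $S(\h)^W$ coincide are a nice clarification the paper omits.
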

\begin{proof}
We can read the GK dimension and the GK multiplicity of a graded $S(\h^*)$-module from the dimensions
of the graded components (by the definitions of those invariants). Note that the GK multiplicity is the same
as the generic rank.

Choose a section $\sigma: \C/\mathbb{Z}\rightarrow \C$. Introduce the grading on
$M\in \OCat_c$ by declaring that the degree of the generalized eigenspace for $h$ with eigenvalue
$\alpha$ is $\alpha-\sigma(\alpha)$. Then we get the grading on $M^\vee$ compatible with the Euler
grading on $H_{c^*}(W,\h^*)$ and such that $\dim M^\vee_n= \dim M_n$. This completes the proof for the usual
categories $\OCat$. The proof for spherical categories is similar.
\end{proof}


We will also need induction and restriction functors for categories $\OCat_c$ introduced in \cite{BE}.
Let $W'$ be a parabolic subgroup of $W$. Then we have an exact functor $\operatorname{Res}_W^{W'}:
\mathcal{O}_c(W)\rightarrow \mathcal{O}_c(W')$. The target category is the category $\mathcal{O}$
for $H_c(W')$, where the parameter is obtained by restricting $c$ to $W'\cap S$. We will need two
facts about the restriction functor:
\begin{itemize}
\item If $W'$ is the stabilizer of a generic point in an irreducible component of $\VA(M)$, then
$\operatorname{Res}_W^{W'}(M)$ is a nonzero finite dimensional module.
\item The functor $\operatorname{Res}_W^{W'}$ induces a functor between the quotients,
$\OCat_c^{sph}\rightarrow \OCat^{sph}_c(W')$.
\end{itemize}
(a) was established in \cite{BE}, while (b) is in \cite[3.5]{GL}.

\subsection{Totally aspherical parameters}
Recall that we call a parameter $c$ {\it totally aspherical} if $eM=0$ for all $S(\h^*)$-torsion modules
$M\in \OCat_c(W)$. Equivalently, $c$ is totally aspherical if and only if $\VA(N)=\h/W$ for all $N\in \OCat_c^{sph}(W)$.

\begin{Lem}\label{Lem:freeness}
If $c$ is totally aspherical, then all modules in $\OCat_c^{sph}(W)$ are free over $S(\h^*)^W$.
\end{Lem}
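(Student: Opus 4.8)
The plan is to show that any $N\in\OCat_c^{sph}(W)$ is Cohen--Macaulay over $S(\h^*)^W$, and then combine this with the hypothesis $\VA(N)=\h/W$ to conclude freeness. Since $S(\h^*)^W$ is a polynomial ring (by Shephard--Todd--Chevalley), a finitely generated module over it is free if and only if it is Cohen--Macaulay of full support; indeed a Cohen--Macaulay module over a regular local (or graded polynomial) ring whose support is everything must be a maximal Cohen--Macaulay module, hence free by Auslander--Buchsbaum. So the real content is the Cohen--Macaulay property, which holds for \emph{every} parameter, and it is only at the very last step that total asphericity enters, via the reformulation $\VA(N)=\h/W$ for all $N\in\OCat_c^{sph}(W)$ recorded just before the lemma.

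The first step is therefore to establish that every $N\in\OCat_c^{sph}(W)$ is a Cohen--Macaulay $S(\h^*)^W$-module. I would deduce this from the corresponding statement for the non-spherical category: every $M\in\OCat_c(W)$ is Cohen--Macaulay over $S(\h^*)$. The latter is a known property of category $\OCat$ for rational Cherednik algebras — it follows from the fact that $M$ has a finite filtration by Verma modules' quotients, or more robustly from the holonomicity/characteristic-cycle considerations: $M$ viewed via $\gr M$ is a module over $S(\h\oplus\h^*)\#W$ supported on a Lagrangian, and standard homological algebra (the grading by $h$-eigenvalues gives a good filtration, and $\mathrm{Ext}^i_{S(\h^*)}(M,S(\h^*))=0$ for $i$ below the codimension of the support) yields that $M$ is Cohen--Macaulay over $S(\h^*)$ of dimension $\dim\VA(M)$. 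Then for $N\in\OCat_c^{sph}(W)$, by Lemma~\ref{Lem:quot_to_spher} we have $N=eM$ for some $M\in\OCat_c(W)$ (take $M=H_ce\otimes_{eH_ce}N$, which maps onto $N$ after applying $e$; alternatively use that $eM$ ranges over all of $\OCat_c^{sph}$). Since $e$ is an idempotent in $\C W$ and $S(\h^*)$ is free of finite rank over $S(\h^*)^W$, the functor $M\mapsto eM$ together with restriction of scalars along $S(\h^*)^W\hookrightarrow S(\h^*)$ transports the Cohen--Macaulay property: $M$ Cohen--Macaulay over $S(\h^*)$ implies $M$ Cohen--Macaulay over $S(\h^*)^W$ (a finite free extension of rings preserves depth and dimension), and $eM$ is a direct summand of $M$ as an $S(\h^*)^W$-module, hence also Cohen--Macaulay over $S(\h^*)^W$.

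The final step is to invoke total asphericity in the form $\VA(N)=\h/W$, i.e.\ $N$ has full support over $S(\h^*)^W$, so $\dim_{S(\h^*)^W} N=\dim S(\h^*)^W=n$. A Cohen--Macaulay module of dimension equal to the dimension of the ambient polynomial ring $S(\h^*)^W$ is a maximal Cohen--Macaulay module; by the Auslander--Buchsbaum formula its projective dimension is zero, so it is projective, hence free since $S(\h^*)^W$ is a graded polynomial ring (every graded projective module over it is free).

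The main obstacle I anticipate is the Cohen--Macaulayness step, specifically making precise the passage from $\OCat_c(W)$ to $\OCat_c^{sph}(W)$: one needs that every object of the spherical category really does arise as $eM$ with $M$ genuinely in $\OCat_c(W)$ (this is the content of Lemma~\ref{Lem:quot_to_spher}, the section/right-inverse statement, so it is available), and one needs the commutative-algebra lemma that over the finite free extension $S(\h^*)^W\subset S(\h^*)$ Cohen--Macaulayness descends to summands. Both are routine but should be stated carefully. By contrast, the Auslander--Buchsbaum endgame is entirely standard once full support is in hand.
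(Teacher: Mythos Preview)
Your overall strategy --- Cohen--Macaulayness plus full support gives maximal CM, hence projective over a regular ring, hence free in the graded setting --- is exactly the paper's, which compresses it to three lines citing \cite[3.2]{EGL} for the CM property and then noting that torsion-free implies projective and graded implies free.

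However, your proposed route to Cohen--Macaulayness contains a genuine error: it is \emph{not} true that every $M \in \OCat_c(W)$ is Cohen--Macaulay over $S(\h^*)$. Take any parameter $c$ admitting a nonzero finite-dimensional simple $L$, and set $M = \Delta_c(\lambda) \oplus L$. As an $S(\h^*)$-module this is a free module of rank $\dim\lambda$ direct sum a nonzero finite-dimensional module, which has Krull dimension $n=\dim\h$ but depth $0$, hence is not CM. Your Lagrangian-support heuristic only controls $\mathrm{Ext}^i_{S(\h^*)}(M,S(\h^*))$ for $i$ below the codimension of the support, not above, and the ``filtration by Verma quotients'' line cannot work either, since subquotients of Vermas need not be CM of a common dimension. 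What the EGL argument actually yields, and what the paper is invoking, is CM for modules whose support in $\h$ (equivalently $\h/W$) is equidimensional. Under total asphericity every object of $\OCat_c^{sph}(W)$ has full support $\h/W$, so equidimensionality is automatic and the EGL argument applies directly in the spherical category. In other words, the total-asphericity hypothesis already enters at the CM step, not only at the end; your detour through arbitrary objects of $\OCat_c(W)$ should be replaced by running the argument directly in $\OCat_c^{sph}(W)$ with the full-support hypothesis in hand.
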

\begin{proof}
Repeating the argument from \cite[3.2]{EGL}, we see that all modules in $\OCat_c^{sph}(W)$ are Cohen-Macaulay
over $S(\h^*)^W$. Since all modules are torsion-free, this implies that they are projective. But they are
also graded and hence are free.
\end{proof}

\begin{Cor}\label{Cor:invar}
Let $c$ be a totally aspherical parameter and let $M\in \OCat_c^{sph}(W)$.
Then $$\dim M^{S(\h)^W_+}=\mathsf{rk}M.$$
\end{Cor}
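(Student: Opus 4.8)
The plan is to reduce the statement to Lemma~\ref{Lem:freeness} by means of the naive duality $M\mapsto M^\vee$, since freeness of $M$ over $S(\h^*)^W$ alone does not obviously control the $S(\h)^W_+$-invariants (the subalgebras $S(\h^*)^W$ and $S(\h)^W$ of $eH_ce$ do not commute). So the first step is to check that total asphericity is inherited by the dual parameter $c^*$. Recall that $\OCat^{sph}_c(W,\h)\xrightarrow{\sim}\OCat^{sph}_{c^*}(W,\h^*)$, $M\mapsto M^\vee$, is an equivalence of abelian categories and, by Lemma~\ref{Lem:GK_preserv}, preserves GK dimensions. A nonzero object of $\OCat^{sph}_c(W,\h)$ is torsion over $S(\h^*)^W$ exactly when its GK dimension is less than $\dim\h$, and likewise a nonzero object of $\OCat^{sph}_{c^*}(W,\h^*)$ is torsion over $S((\h^*)^*)^W=S(\h)^W$ exactly when its GK dimension is less than $\dim\h^*=\dim\h$. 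Hence $\OCat^{sph}_c(W,\h)$ has a nonzero torsion object if and only if $\OCat^{sph}_{c^*}(W,\h^*)$ does, and therefore $c$ is totally aspherical if and only if $c^*$ is.

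Applying Lemma~\ref{Lem:freeness} to the parameter $c^*$, I get that $M^\vee$ is free over $S(\h)^W$, of rank $\mathsf{rk}\,M^\vee=\mathsf{rk}\,M$ (the equality of ranks again by Lemma~\ref{Lem:GK_preserv}). Since $S(\h)^W$ is a graded polynomial algebra (Chevalley--Shephard--Todd) with augmentation ideal $S(\h)^W_+$, it follows that $\dim\bigl(M^\vee/S(\h)^W_+M^\vee\bigr)=\mathsf{rk}\,M$.

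The last step is to identify $M^{S(\h)^W_+}$ with $M^\vee/S(\h)^W_+M^\vee$ as vector spaces. The isomorphism $\varphi$ sends $S(\h)^W\subset H_c(W,\h)$ identically to $S(\h)^W\subset H_{c^*}(W,\h^*)$, so the operators on $M^\vee$ coming from $S(\h)^W_+$ are the transposes of the corresponding operators on $M$. Fix a $\Z$-grading on $M$ compatible with the Euler grading as in the proof of Lemma~\ref{Lem:GK_preserv}, so that $M^\vee=\bigoplus_n M_n^*$ with all $M_n$ finite dimensional and both $M^{S(\h)^W_+}$ and $S(\h)^W_+M^\vee$ graded. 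Picking homogeneous generators of $S(\h)^W_+$, in each degree $n$ the space $M^{S(\h)^W_+}\cap M_n$ is the joint kernel in $M_n$ of the corresponding operators, which is the annihilator in $M_n$ of the degree-$n$ part of $S(\h)^W_+M^\vee\subset M^\vee$; so the evaluation pairing restricts to a perfect pairing between the degree-$n$ parts of $M^{S(\h)^W_+}$ and of $M^\vee/S(\h)^W_+M^\vee$. Summing over $n$ gives $\dim M^{S(\h)^W_+}=\dim\bigl(M^\vee/S(\h)^W_+M^\vee\bigr)=\mathsf{rk}\,M$.

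The only point that needs care is the bookkeeping around the duality: it interchanges $\h$ and $\h^*$, so the free/torsion behaviour of $M^\vee$ is controlled by $S(\h)^W$ rather than $S(\h^*)^W$, and the augmentation ideal relevant for $M^{S(\h)^W_+}$ after dualizing is precisely the one cutting out the coinvariant quotient on the $c^*$-side. Beyond that, the argument is just Lemma~\ref{Lem:freeness} together with elementary linear algebra; there is no serious obstacle.
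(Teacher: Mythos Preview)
Your proof is correct and follows essentially the same route as the paper's own proof: reduce to Lemma~\ref{Lem:freeness} via the naive duality, use Lemma~\ref{Lem:GK_preserv} to transfer total asphericity to $c^*$ and to match ranks, and then identify $M^{S(\h)^W_+}$ with $M^\vee/S(\h)^W_+M^\vee$. The paper is more terse (it just cites Lemma~\ref{Lem:GK_preserv} for the asphericity of $c^*$ and ``the construction of $\bullet^\vee$'' for the last identification), but you have expanded exactly the steps it leaves implicit.
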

\begin{proof}
Recall the duality $\bullet^\vee: \OCat_{c}^{sph}(W,\h)\xrightarrow{\sim} \OCat_{c^*}^{sph}(W,\h^*)$
explained in Subsection \ref{SS_cat_O}.
The parameter $c^*$ is totally aspherical
for $H_{c^*}(W,\h^*)$, for example, because the duality preserves GK dimensions, Lemma \ref{Lem:GK_preserv}.
So $\dim M^\vee/ S(\h)^W_+ M^\vee=\mathsf{rk}M^\vee=\mathsf{rk}M$, the first equality follows from
Lemma \ref{Lem:freeness} and the second one from Lemma \ref{Lem:GK_preserv}.
But $\dim M^{S(\h)^W_+}=\dim M^\vee/ S(\h)^W_+ M^\vee$ by the construction of $\bullet^\vee$.
\end{proof}

\subsection{Harish-Chandra module vs $P_{KZ}$}
In this subsection, we prove Theorem \ref{Thm:PKZ_vs_HC}.

For $M\in \OCat_c(W)$, we have \begin{equation}\label{eq:Hom_dim}\Hom_{\OCat_c(W)}(\HC_c, M)\cong \Hom_{S(\h)\#W}(S(\h)/(S(\h)^W_+),M)\cong   e(M^{S(\h)^W_+})=(eM)^{S(\h)^W_+}.\end{equation}

Let us prove the implication $(b)\Rightarrow (a)$.
It follows from Corollary \ref{Cor:invar} that $$\dim \Hom_{\OCat_c(W)}(\HC_c,M)=\mathsf{rk}(eM).$$
The right hand side coincides with  $\mathsf{rk}(M)$ which, in turn,
equals $\dim \Hom_{\OCat_c(W)}(P_{\KZ,c},M)$. So we conclude that
\begin{equation}\label{eq:Hom_dim2}\dim\Hom_{\OCat_c(W)}(\HC_c,M)=\dim \Hom_{\OCat_c(W)}(P_{\KZ,c},M).\end{equation}
Since $P_{\KZ,c}$ is projective, equivalently, the functor $\Hom_{\OCat_c(W)}(P_{\KZ,c},\bullet)$ is exact,
we see that the functor $\Hom_{\OCat_c(W)}(\HC_c,\bullet)$ is exact, equivalently, $\HC_c$ is projective.
(\ref{eq:Hom_dim2}) now implies  $\HC_c\cong P_{\KZ,c}$.

Let us prove (a)$\Rightarrow$(b). Assume there is a simple object $M\in \OCat_c(W)$ that is $S(\h^*)$-torsion
(equivalently, $\Hom_{\OCat_c(W)}(P_{\KZ,c},M)=0$) such that $eM\neq 0$. Since $S(\h)^W_+$ acts
locally nilpotently, we see that $(eM)^{S(\h)^W_+}\neq 0$. Thanks to (\ref{eq:Hom_dim}),
we see that $\Hom_{\OCat_c(W)}(\HC_c,M)\neq 0$. So $\HC_c\not\cong P_{\KZ,c}$.

\begin{Rem}\label{Rem:not_proj}
Note that if $c$ is not totally aspherical, then $\mathfrak{H}_c$ is not projective. Assume the converse. We have
\begin{align*}&\dim \operatorname{Hom}_{\OCat_c(W)}(\mathfrak{H}_c, M)=\dim (e M)^{S(\h)^W_+}=
\dim (eM)^\vee/ S(\h)^W_+ (eM)^\vee\\&\geqslant
\mathsf{rk}(eM)^\vee=\mathsf{rk}(eM)=\mathsf{rk}(M).\end{align*}
So, for a torsion free $L_c(\lambda)$, we see that the multiplicity of $P_c(\lambda)$ in $\mathfrak{H}_c$
is not less than the multiplicity of $P_c(\lambda)$ in $P_{\operatorname{KZ},c}$. But both $P_{\operatorname{KZ},c}$
and $\mathfrak{H}_c$ have a filtration of length $\sum_{\lambda\in \operatorname{Irr}W}\dim \lambda$ with
Verma quotients. This length is independent of the choice of the filtration and hence
$\mathfrak{H}_c\cong P_{\operatorname{KZ},c}$. Contradiction.
\end{Rem}


\subsection{Total asphericity vs simplicity}
Here we are going to provide one more equivalent formulation of total asphericity.

\begin{Prop}\label{Prop:simplicity}
A parameter $c$ is totally aspherical if and only if the algebra $eH_ce$ is simple.
\end{Prop}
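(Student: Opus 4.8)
The plan is to prove both implications by relating simplicity of $eH_ce$ to the behavior of torsion modules in $\OCat_c(W)$, using the localization isomorphism $H_c[\delta^{-1}]\cong D(\h^{reg})\#W$ together with Lemma \ref{Lem:quot_to_spher}.

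\textbf{The direction ``$eH_ce$ simple $\Rightarrow$ $c$ totally aspherical''.} Suppose $c$ is not totally aspherical; I want to produce a proper nonzero two-sided ideal of $eH_ce$. By assumption there is a nonzero $M\in\OCat_c(W)$ that is $S(\h^*)$-torsion but with $eM\neq 0$. Set $N:=eM\in\OCat_c^{sph}(W)$ (Lemma \ref{Lem:quot_to_spher}); it is a nonzero $eH_ce$-module that is torsion over $S(\h^*)^W$, so $\VA(N)\neq \h/W$, i.e. $N$ is supported on a proper closed subset. Let $I=\operatorname{Ann}_{eH_ce}(N)$; this is a two-sided ideal, and it is proper since $N\neq 0$. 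It remains to see that $I\neq 0$. For this I pass to the localization: $e H_c[\delta^{-1}]e\cong e(D(\h^{reg})\#W)e=D(\h^{reg})^W$, which is a simple ring, so if $eH_ce$ were simple the natural map $eH_ce\to D(\h^{reg})^W$ would be injective; but $N$ is killed by a power of the $W$-invariant element $\delta$ (it is torsion and supported away from the generic locus, and $\delta$ cuts out the non-generic locus), so $\delta^k\in I$ for some $k$ up to multiplication issues—more precisely the image of $I$ in the localization is all of $D(\h^{reg})^W$, whence $I\neq 0$. Thus $eH_ce$ has a proper nonzero ideal, a contradiction. (The one point needing care: realizing $\delta$, or rather its image $e\delta e = \delta e$, as an actual element of a nonzero ideal of $eH_ce$ annihilating $N$; since $\delta\in S(\h^*)^W$ acts nilpotently on the finitely generated module $N$, some power $\delta^k$ annihilates $N$, and $\delta^k e\in eH_ce$ is nonzero because $eH_ce$ embeds into $\operatorname{gr}$-land $S(\h\oplus\h^*)^W$, which is a domain.)

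\textbf{The direction ``$c$ totally aspherical $\Rightarrow$ $eH_ce$ simple''.} Let $J$ be a nonzero two-sided ideal of $eH_ce$; I must show $J=eH_ce$. Localizing, $J[\delta^{-1}]$ is a two-sided ideal of the simple ring $D(\h^{reg})^W$, hence is either $0$ or everything. If $J[\delta^{-1}]=D(\h^{reg})^W$, then the quotient $eH_ce/J$ is $\delta$-torsion, so as a left $eH_ce$-module it is supported on the proper subvariety $\{\delta=0\}\subset\h/W$. I then want to conclude $eH_ce/J=0$. This is where total asphericity enters: I will argue that $eH_ce/J$, suitably interpreted, lies in (or is built from objects of) $\OCat_c^{sph}(W)$, on which total asphericity forbids nonzero modules with proper associated variety. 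Concretely, using the graded (Euler) structure: $eH_ce$ is a graded algebra with finite-dimensional graded pieces in each Euler degree modulo the $\operatorname{ad}(ehe)$-weight decomposition, and one can find a finitely generated graded submodule quotient of $eH_ce/J$ that lands in $\OCat_c^{sph}(W)$ and is nonzero with $\delta$-torsion, contradicting Lemma \ref{Lem:freeness} (free modules over $S(\h^*)^W$ are torsion-free). The remaining case $J[\delta^{-1}]=0$ says $J$ is $\delta$-torsion as a bimodule, i.e. $J$ itself is a nonzero $eH_ce$-module supported on $\{\delta=0\}$; but $J$ is a submodule of $eH_ce$, which is free hence torsion-free over $S(\h^*)^W$ by Lemma \ref{Lem:freeness} applied to the regular module (or directly, since $eH_ce\subset D(\h^{reg})^W$), so $J$ is torsion-free, forcing $J=0$, contradiction. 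Hence $J=eH_ce$.

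\textbf{Main obstacle.} The delicate point in both directions is the same: translating between two-sided ideals of the algebra $eH_ce$ and objects of the \emph{left}-module category $\OCat_c^{sph}(W)$, so that the hypothesis (total asphericity, phrased via associated varieties of category-$\OCat$ modules) can be brought to bear on a ring-theoretic statement about $\delta$-torsion. The cleanest route is probably to observe that $eH_ce$, as a left module over itself with its Euler grading, is a union of finitely generated graded submodules lying in $\OCat_c^{sph}(W)$ (each bounded piece with respect to the $ehe$-grading being finitely generated over $S(\h^*)^W$ and locally nilpotent for the positive part), so any proper quotient that is $\delta$-torsion would contain a nonzero $\delta$-torsion object of $\OCat_c^{sph}(W)$, which total asphericity (equivalently Lemma \ref{Lem:freeness}) rules out. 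I would spend most of the write-up making this reduction precise; the simplicity of $D(\h^{reg})^W$ and the localization isomorphism are standard and quotable from \cite{EG,GGOR}.
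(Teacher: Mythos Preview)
Your argument for the implication ``$eH_ce$ simple $\Rightarrow$ $c$ totally aspherical'' is correct and in fact more elementary than the paper's: the paper routes both directions through an auxiliary condition~(*) on parabolic subgroups (Lemmas~\ref{Lem:tot_asph_char} and~\ref{Lem:simpl_char}), using the induction/restriction functors of \cite{BE} and the $\bullet_\dagger,\bullet^\dagger$ functors of \cite{sraco}, whereas you observe directly that the annihilator of $eM$ is a proper two-sided ideal containing the nonzero element $\delta^k e$.

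The converse, however, has a genuine gap at exactly the point you flag as the ``main obstacle''. Your proposed fix---writing $eH_ce$, as a left module over itself, as a union of finitely generated graded submodules lying in $\OCat_c^{sph}(W)$---is false. Any nonzero left $eH_ce$-submodule of $eH_ce$ contains a cyclic module $eH_ce\cdot a$, and on such a module $S(\h)^W_+$ (the part of $eH_ce$ with negative Euler degree, which is what must act locally nilpotently for membership in $\OCat_c^{sph}$) never acts locally nilpotently: $S(\h)^W$ acts torsion-freely on $eH_ce$ since $\gr eH_ce=\C[\h\oplus\h^*]^W$ is a domain. So no nonzero submodule of $eH_ce$ lies in $\OCat_c^{sph}$, and the same obstruction prevents you from directly locating a nonzero $\OCat_c^{sph}$-subquotient inside $eH_ce/J$. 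More generally, passing from a proper two-sided ideal $J$ (equivalently, a $\delta$-torsion bimodule $eH_ce/J$) to a nonzero object of $\OCat_c^{sph}$ with proper support is precisely the nontrivial step; it amounts to a Duflo-type statement that is not available by the elementary means you outline.

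The paper supplies this missing link by geometry: given a proper ideal $\mathcal{J}\subset eH_ce$, one picks a generic point (with stabilizer $W'$) on an irreducible component of its associated variety in $(\h\oplus\h^*)/W$ and applies the completion functor $\bullet_\dagger$ of \cite{sraco}, obtaining a proper ideal of \emph{finite codimension} in the slice algebra $e_{W'}H_c(W')e_{W'}$. Finite codimension is what produces a finite-dimensional module, which \emph{is} in the relevant category $\OCat$, and condition~(*) (equivalent to total asphericity by Lemma~\ref{Lem:tot_asph_char}) then yields the contradiction. Without a substitute for this step---either the $\bullet_\dagger$ machinery or an independent Duflo-type theorem for $eH_ce$---your argument for ``totally aspherical $\Rightarrow$ simple'' does not close.
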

 Let us introduce the following condition
on $c$.

\begin{itemize}
\item[(*)] $e_{W'}M'=0$ for any parabolic subgroup
$W'\subset W$ different from $\{1\}$ and any finite dimensional $H_c(W')$-module $M'$. Here we write
$e_{W'}$ for the averaging idempotent in $\C W'$.
\end{itemize}

The proposition follows from the next two lemmas.

\begin{Lem}\label{Lem:tot_asph_char}
The parameter $c$ is totally aspherical  if and only if (*) holds.
\end{Lem}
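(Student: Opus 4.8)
\textbf{Proof proposal for Lemma \ref{Lem:tot_asph_char}.}

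The plan is to prove both implications by exploiting the induction/restriction functors of \cite{BE} together with Lemma \ref{Lem:quot_to_spher}, reducing a statement about torsion modules in $\OCat_c(W)$ to a statement about finite-dimensional modules over the parabolic Cherednik algebras $H_c(W')$.

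First I would prove that $(*)$ implies total asphericity. Recall that $c$ fails to be totally aspherical exactly when there is a nonzero module $N\in\OCat_c^{sph}(W)$ with $\VA(N)\neq\h/W$, equivalently (pulling back along the quotient functor of Lemma \ref{Lem:quot_to_spher}, whose right inverse is $N\mapsto H_ce\otimes_{eH_ce}N$) a nonzero module $M=H_ce\otimes_{eH_ce}N\in\OCat_c(W)$ with $eM=N\neq 0$ but $\VA(M)\subsetneq\h$; we may shrink $M$ to a module whose associated variety is a single irreducible $W$-stable subvariety $\neq\h$. Let $W'$ be the stabilizer of a generic point of an irreducible component of $\VA(M)$; since $\VA(M)\neq\h$ we have $W'\neq\{1\}$. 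By property (a) of the restriction functor recalled in Subsection \ref{SS_cat_O}, $\operatorname{Res}_W^{W'}(M)$ is a nonzero finite-dimensional $H_c(W')$-module. By property (b), $\operatorname{Res}_W^{W'}$ descends to the spherical quotients, so $e_{W'}\operatorname{Res}_W^{W'}(M)$ is, up to the quotient functor, the image of $eM\neq 0$; I would check that it is nonzero (using that $eM\neq 0$ and that the restriction functor on spherical categories is a quotient functor, hence sends the nonzero object $eM$, which generates a nonzero subquotient supported at the component in question, to something nonzero). This contradicts $(*)$ applied to $W'\neq\{1\}$ and $M'=\operatorname{Res}_W^{W'}(M)$.

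Conversely, to show total asphericity implies $(*)$, let $W'\neq\{1\}$ be a parabolic subgroup and $M'$ a finite-dimensional $H_c(W')$-module; I want $e_{W'}M'=0$. The idea is to run the induction functor $\operatorname{Ind}_{W'}^W$ in reverse: a finite-dimensional $H_c(W')$-module induces (via the functor adjoint to $\operatorname{Res}$, or directly via $\Delta$-type induction of Bezrukavnikov--Etingof) a module $M\in\OCat_c(W)$ whose associated variety is contained in $W\cdot\h^{W'}\subsetneq\h$, hence $M$ is $S(\h^*)$-torsion, so by total asphericity $eM=0$. Then one must extract $e_{W'}M'=0$ from $eM=0$; here I would use the compatibility of the spherical restriction/induction functors with the idempotents — concretely, $\operatorname{Res}_W^{W'}$ on spherical categories sends $eM$ to (a direct summand of, or something built from) $e_{W'}M'$, and one needs that $\operatorname{Res}_W^{W'}$ is conservative enough, or alternatively that $e_{W'}M'$ appears as a direct summand of $e\operatorname{Ind}_{W'}^W(M')$ by a projection formula.

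The main obstacle I anticipate is exactly this bookkeeping at the level of the idempotents $e$ versus $e_{W'}$: the restriction functor $\operatorname{Res}_W^{W'}$ of \cite{BE} is defined via a completion and an isomorphism $H_c(W)^{\wedge}\cong\operatorname{Mat}(H_c(W')^{\wedge}\otimes\cdots)$, and chasing the averaging idempotents through this isomorphism (to see that $eM\neq 0 \iff e_{W'}\operatorname{Res}(M)\neq 0$ for $M$ supported on the relevant component) requires care; I expect this to be where the real content of \cite[3.5]{GL} is used. Once that compatibility is in hand, both implications are short. I would also remark that $(*)$ is manifestly a closed condition obtained by checking finitely many parabolic subgroups up to conjugacy, which is why it is a convenient reformulation.
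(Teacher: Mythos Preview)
Your overall strategy matches the paper's: use the Bezrukavnikov--Etingof restriction/induction functors and their compatibility with the spherical projections to pass between torsion modules for $W$ and finite-dimensional modules for the parabolic subgroups $W'$. The first implication $(*)\Rightarrow$ totally aspherical is essentially what the paper does, though your parenthetical justification is off: the restriction functor $\operatorname{Res}_W^{W'}$ is \emph{not} a quotient functor. What you actually need (and what property (b) from Subsection \ref{SS_cat_O} gives) is that $e_{W'}\operatorname{Res}_W^{W'}(M)=\operatorname{Res}_W^{W'}(eM)$, together with the spherical analogue of property (a) to guarantee that this is nonzero when $W'$ is the generic stabilizer.

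The genuine gap is in the converse direction, at exactly the point you flag. You induce $M'$ to $W$, observe that $e\operatorname{Ind}_{W'}^W(M')=0$ by total asphericity, and then need to conclude $e_{W'}M'=0$. Your suggested routes (``$\operatorname{Res}$ conservative enough'' or a projection formula making $e_{W'}M'$ a summand of $e\operatorname{Ind}_{W'}^W(M')$) are not quite right and would require substantial additional input. The paper's trick is cleaner and avoids all of this: assuming $e_{W'}M'\neq 0$, replace $M'$ by $\tilde{N}:=H_c(W')e_{W'}\otimes_{e_{W'}H_c(W')e_{W'}}(e_{W'}M')$, so that $e_{W'}\tilde{N}\neq 0$ by construction. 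Then $e\operatorname{Ind}_{W'}^W(\tilde{N})=0$ (proper support, cf.\ \cite{SV}), hence by property (b)
\[
e_{W'}\operatorname{Res}_W^{W'}\operatorname{Ind}_{W'}^W(\tilde{N})=\operatorname{Res}_W^{W'}\bigl(e\operatorname{Ind}_{W'}^W(\tilde{N})\bigr)=0.
\]
But adjunction gives a nonzero map $\operatorname{Res}_W^{W'}\operatorname{Ind}_{W'}^W(\tilde{N})\to\tilde{N}$, and since $e_{W'}\tilde{N}\neq 0$ this forces $e_{W'}\operatorname{Res}_W^{W'}\operatorname{Ind}_{W'}^W(\tilde{N})\neq 0$, a contradiction. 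So the missing idea is to compose $\operatorname{Res}$ with $\operatorname{Ind}$ and use the counit of the adjunction, rather than trying to split off $e_{W'}M'$ directly.
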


This lemma will also be important in Section \ref{S_cycl} to determine  aspherical values for
the groups $W=G(\ell,1,n)$.

\begin{proof}
Recall  the restriction functors $\operatorname{Res}_W^{W'}:
\OCat_c(W)\rightarrow \OCat_c(W'),\OCat_c^{sph}(W)\rightarrow \OCat^{sph}_{c}(W')$ and  that
for any nonzero $M\in \OCat_c(W)$ there is $W'$ such that $\operatorname{Res}_W^{W'}(M)$
is nonzero finite dimensional. So if $e_{W'}M'=0$ for any finite dimensional $M'\in \OCat_c(W')$,
then $c$ is totally aspherical for $H_c(W)$.

Now suppose that $c$ is totally aspherical. Recall the functor $\operatorname{Ind}_{W'}^W$ from
\cite{BE}, a left and a right adjoint of $\operatorname{Res}_W^{W'}$. Let $N$ be a finite dimensional
$e_{W'}H_c(W')e_{W'}$-module. The $H_c(W')$ module
$\tilde{N}:=H_c(W')e_{W'}\otimes_{e_{W'}H_c(W')e_{W'}}N$ is also finite dimensional because
$H_c(W')e$ is finitely generated over $e_{W'}H_c(W')e_{W'}$. The module $\operatorname{Ind}_{W'}^W(\tilde{N})$
is nonzero and has proper support by \cite[Proposition 2.7]{SV}. So $e\operatorname{Ind}_{W'}^W(\tilde{N})=0$. The module
$\operatorname{Res}_W^{W'}\circ\operatorname{Ind}_{W'}^W(\tilde{N})$ is not killed by $e_{W'}$ because it admits
a nonzero homomorphism to $\tilde{N}$. On the other hand,
$$e_{W'}\operatorname{Res}_W^{W'}\circ\operatorname{Ind}_{W'}^W(\tilde{N})=
\operatorname{Res}_W^{W'}(e\operatorname{Ind}_{W'}^W(\tilde{N}))=0.$$
This contradiction completes the proof.
\end{proof}


\begin{Lem}\label{Lem:simpl_char}
The algebra $eH_ce$ is simple if and only if (*) holds.
\end{Lem}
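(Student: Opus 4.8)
The plan is to characterize simplicity of $eH_ce$ by localizing away from the reflection hyperplanes and analyzing what proper ideals can survive. Since $H_c[\delta^{-1}]\cong D(\h^{reg})\#W$, we have $eH_ce[\delta^{-1}]\cong (D(\h^{reg})\#W)^W = D(\h^{reg})^W = D(\h^{reg}/W)$, which is a simple algebra. Hence any nonzero two-sided ideal $I\subset eH_ce$ must be supported, in the sense of the associated variety of $\gr I$ inside $(\h\oplus\h^*)/W$, on the union of the sets where some nontrivial parabolic $W'$ fixes a point of $\h$ (the "bad" locus in $\h$, times $\h^*$). The idea is that the existence of such an $I$ forces the existence of a nonzero finite-dimensional $H_c(W')e_{W'}$-torsion phenomenon at a generic point of one of these strata, i.e. forces $(*)$ to fail; conversely, failure of $(*)$ produces an ideal.

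More precisely, for the implication "$(*)$ holds $\Rightarrow eH_ce$ simple", I would argue by contradiction: take a minimal prime $\mathfrak{q}$ over a proper nonzero ideal, pick a generic point $x$ in the corresponding stratum of $\h$ with stabilizer $W'\neq\{1\}$, and use a completion/restriction argument along that stratum — the same slice construction underlying the Bezrukavnikov--Etingof restriction functors — to reduce $eH_ce$ near $x$ to (a matrix algebra over) $e_{W'}H_c(W')e_{W'}$ completed along $\h^*$, tensored with $D$ of the transverse directions. A proper ideal surviving at $x$ then yields a proper ideal of $e_{W'}H_c(W')e_{W'}$ whose associated variety lies over $\{0\}\subset\h$, and quotienting produces a nonzero finite-dimensional module $M'$ over $H_c(W')$ with $e_{W'}M'\neq 0$, contradicting $(*)$. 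For the reverse implication, if $(*)$ fails, there is a nontrivial parabolic $W'$ and a finite-dimensional $H_c(W')$-module $M'$ with $e_{W'}M'\neq 0$; then $e_{W'}M'$ is a nonzero finite-dimensional $e_{W'}H_c(W')e_{W'}$-module, so $e_{W'}H_c(W')e_{W'}$ has a proper ideal (its annihilator), and pushing forward via the induction functor $\operatorname{Ind}_{W'}^W$ as in the proof of Lemma \ref{Lem:tot_asph_char} — together with the fact that $\operatorname{Ind}$ and $\operatorname{Res}$ descend to the spherical categories and that $\operatorname{Res}_W^{W'}\operatorname{Ind}_{W'}^W(\tilde N)$ maps onto $\tilde N$ — yields a nonzero proper submodule situation that obstructs simplicity; concretely the annihilator in $eH_ce$ of the $eH_ce$-module $e\operatorname{Ind}_{W'}^W(\tilde N)$-related construction is a proper nonzero ideal.

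Actually a cleaner route to "$(*)$ fails $\Rightarrow eH_ce$ not simple" may be to combine the already-proven Lemma \ref{Lem:tot_asph_char} with the observation that if $c$ is not totally aspherical there is an $S(\h^*)$-torsion simple $L\in\OCat_c(W)$ with $eL\neq 0$; then $eL$ is a nonzero object of $\OCat_c^{sph}(W)$ with $\VA(eL)\subsetneq \h/W$, so $eL$ is killed by a nonzero element $f\in S(\h^*)^W$, and the two-sided ideal $(eH_ce)f(eH_ce)$ together with the annihilator of $eL$ shows $eH_ce$ has a nontrivial finite-length quotient, hence is not simple. This makes the whole Proposition \ref{Prop:simplicity} follow formally by chaining Lemmas \ref{Lem:tot_asph_char} and \ref{Lem:simpl_char}.

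The main obstacle I expect is the slice/completion argument controlling $eH_ce$ near a generic point of a bad stratum and identifying it with the spherical subalgebra of the parabolic $H_c(W')$ (up to Morita equivalence and transverse Weyl-algebra factors) in a way that tracks two-sided ideals faithfully. This requires care with completions of non-Noetherian-flavored filtered algebras and with the behaviour of $e$ versus $e_{W'}$; I would lean on the established formalism of Bezrukavnikov--Etingof restriction and its spherical version from \cite{GL}, and on the fact that $\gr(eH_ce)=S(\h\oplus\h^*)^W$ so that associated varieties of ideals are genuine closed $W$-invariant cones, to make the localization rigorous.
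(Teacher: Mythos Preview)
Your overall strategy matches the paper's: both directions go through a slice/completion analysis at a generic point of a stratum. A few points, some cosmetic and some substantive.

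For ``$eH_ce$ not simple $\Rightarrow$ $(*)$ fails'': this is exactly the paper's argument, but you have the wrong toolbox and a small slip. The associated variety of a two-sided ideal $\mathcal{J}\subset eH_ce$ lives in $(\h\oplus\h^*)/W$, not in $\h$; you want a generic point of an irreducible component there, with stabilizer $W'$. The Bezrukavnikov--Etingof functors from \cite{BE} and their spherical versions in \cite{GL} act on categories $\OCat$ of modules, not on two-sided ideals or Harish-Chandra bimodules. The paper instead uses the functor $\bullet_\dagger$ from \cite{sraco}, which is designed exactly for this: it takes HC bimodules (in particular, two-sided ideals) to HC bimodules over the slice algebra $e_{W'}H_c(W')e_{W'}$, and by the analog of \cite[Theorem 3.4.6]{sraco} the image $\mathcal{J}_\dagger$ is a proper ideal of finite codimension. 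That is the technical input you flagged as the ``main obstacle'', and \cite{sraco} is where it is done.

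For ``$(*)$ fails $\Rightarrow$ $eH_ce$ not simple'': your first attempt via $\operatorname{Ind}_{W'}^W$ does not work as written, since the proof of Lemma~\ref{Lem:tot_asph_char} shows precisely that $e\operatorname{Ind}_{W'}^W(\tilde N)=0$, so there is nothing whose annihilator you can take. Your ``cleaner route'' does work and is a genuine alternative to the paper: by Lemma~\ref{Lem:tot_asph_char} you get an $S(\h^*)$-torsion simple $L\in\OCat_c(W)$ with $eL\neq 0$; since $eL$ is a finitely generated torsion $S(\h^*)^W$-module, some nonzero $f\in S(\h^*)^W$ kills it, and then $fe\in eH_ce$ is a nonzero element of $\operatorname{Ann}_{eH_ce}(eL)$, a proper two-sided ideal. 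The paper instead stays on the bimodule side: it takes the annihilator $\mathcal{I}$ of a finite-dimensional $e_{W'}H_c(W')e_{W'}$-module, applies the functor $\bullet^\dagger$ of \cite{sraco} to $\mathcal{B}=e_{W'}H_c(W')e_{W'}/\mathcal{I}$, and obtains a HC $eH_ce$-bimodule $\mathcal{B}^\dagger$ with a homomorphism $eH_ce\to\mathcal{B}^\dagger$ whose kernel is a proper ideal (its associated variety being the closure of the $W'$-stratum). Your route is shorter because it reuses Lemma~\ref{Lem:tot_asph_char}; the paper's route is self-contained and makes the bimodule symmetry between $\bullet_\dagger$ and $\bullet^\dagger$ explicit.
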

\begin{proof}
Let $\mathcal{J}\subset eH_ce$ be a proper two-sided ideal. Its associated variety in $(\h\oplus \h^*)/W$
is proper. Pick a generic point (with stabilizer, say, $W'$) in an irreducible component and consider the functor $\bullet_{\dagger}$
from \cite{sraco} at that  point. Then $\mathcal{J}_{\dagger}$ is a proper ideal of finite codimension in
$e_{W'} H_c(W')e_{W'}$, by a direct analog of \cite[Theorem 3.4.6]{sraco} for spherical subalgebras
(that holds with the same proof). So $c$ is not totally aspherical.

Now assume that $e_{W'}H_c(W')e_{W'}$ has a finite dimensional representation. Let $\mathcal{I}$ be the annihilator
and set $\mathcal{B}:=e_{W'}H_c(W')e_{W'}/\mathcal{I}$. Then we apply the functor $\bullet^{\dagger}$ associated to $W'$
from  \cite{sraco} to $\mathcal{B}$ and get a Harish-Chandra $H_c$-bimodule
$\mathcal{B}^{\dagger}$ with a homomorphism $eH_c(W)e\rightarrow \mathcal{B}^{\dagger}$. The kernel is
a proper two-sided ideal, its associated variety is the closure of the locus in $(\h\oplus\h^*)/W$
with stabilizer $W'$. So $eH_ce$ is  not simple.
\end{proof}


\subsection{Shifts}\label{SS_shifts}
There is a conjecture saying that there are sufficiently many totally aspherical parameters.
Namely, let us write $\param$ for the space of the $W$-invariant functions $c:S\rightarrow \C$.
Let us write $c_s$ for a function that maps $s'\in S$ to $1$ if $s'$ is conjugate to $s$
and to $0$ else.  For a reflection hyperplane $H$ and $i\in \{0,\ldots,\ell_H-1\}$ (where
$\ell_H$ is the cardinality of the pointwise stabilizer $W_H$) define an element $h_{H,i}\in
\param^*$  by
\begin{equation}\label{eq:h_to_c} h_{H,i}(c)=\frac{1}{\ell_H}\sum_{s\in W_H\setminus \{1\}}\frac{2c_s}{\lambda_s-1}\lambda_s^{-i}
\end{equation}
The elements $h_{H,i}$ with $i\neq 0$ form a basis in $\param^*$ and $\sum_{i=0}^{\ell_H-1}h_{H,i}=0$.
Let $\param^*_{\Z}$ denote the integral
lattice in $\param^*$ spanned by the elements $h_{H,i}-h_{H,0}$
and let $\param_{\Z}$ be dual to $\param^*_\Z$.
For example, when $W=\mathfrak{S}_n$, we get $h_{H,1}-h_{H,0}=c$ and the lattice
$\param_{\Z}$ consists of integers.

In general, the meaning of the  lattice $\param_{\Z}$ is as follows. Recall that from a Cherednik
parameter $c$ one can produce a parameter $q$ for the Hecke algebra of $W$. Two Cherednik parameters
produce the same Hecke parameter if their difference lies in $\param_{\Z}$.

It was conjectured by Rouquier (unpublished) that every coset $c+\param_{\Z}$
contains a parameter $c$ with $P_{\operatorname{KZ},c}=\mathfrak{H}_c$, i.e., see Theorem
\ref{Thm:PKZ_vs_HC},  a completely aspherical parameter. In the next section, we will prove a theorem that  implies  this conjecture
for the groups  $W=G(\ell,1,n)$.

\section{Cyclotomic case}\label{S_cycl}
\subsection{Type $A$}\label{SS_type_A}
Here we are going to prove Corollary \ref{Cor:type_A}. Our proof is based on Lemma \ref{Lem:tot_asph_char}.

Recall that the algebra $H_c(\mathfrak{S}_m)$
has a finite dimensional  representation if and only if $c$ is rational with denominator $m$,
and in this case, there is a unique finite dimensional irreducible representation and every
finite dimensional representation is completely reducible, see \cite{BEG}. Further, the finite
dimensional irreducible representation is annihilated by $e$ if and only if $c\in (-1,0)$.
All parabolic subgroups of $\mathfrak{S}_n$ are of the form $\mathfrak{S}_{n_1}\times\ldots\times
\mathfrak{S}_{n_k}$ with $\sum_{i=1}^k n_i\leqslant n$. Now Corollary \ref{Cor:type_A}
follows from Lemma \ref{Lem:tot_asph_char}.

Let us point out that the totally aspherical parameters outside $(-1,0)$ are either irrational
or have denominator bigger than $n$ (and so  the category $\mathcal{O}_c(\mathfrak{S}_n)$ is semisimple).


\subsection{Quantized quiver varieties}
We proceed to the case of the groups $G(\ell,1,n)$. The spherical subalgebras in this case can be realized
as quantized quiver varieties (i.e., as Hamiltonian reductions of the algebras of differential operators on
spaces of quiver representations). In this subsection we recall how this is done.

Let $Q=(Q_0,Q_1,t,h)$ be the affine Dynkin quiver with $\ell$ vertices
(and some orientation). Consider the representation space
$R=R(Q,v,\epsilon_0)$ with dimension vector $v=(v_i)_{i\in Q_0}$  and one-dimensional co-framing
at the extending vertex, explicitly, $R=\bigoplus_{a\in Q_1}\Hom(V_{t(a)}, V_{h(a)})\oplus V^*_0$,
where $\dim V_i=v_i$. On the space $R$ we have a natural action of the group $G=
\prod_{i\in Q_0}\GL(V_i)$. Let $\epsilon_i\in \C^{Q_0}$ denote the coordinate vector (=the simple root) at
the vertex $i$ and $\delta=\sum_{i\in Q_0} \epsilon_i$ be the indecomposable imaginary root.

To the action of $G$ on $R$ we can associate several varieties/algebras obtained by Hamiltonian reduction.
We have a moment map $\mu: T^*R\rightarrow \g^*$ defined as follows. Its dual map $\mu^*:\g\rightarrow \C[T^*R]$
sends $x\in \g$ to the velocity vector field $x_R\in \operatorname{Vect}(R)$ viewed as an element
of $\C[T^*R]$ via a natural inclusion $\operatorname{Vect}(R)\hookrightarrow \C[T^*R]$.
Consider the variety $X^0(v):=T^*R\red_0G(=\mu^{-1}(0)\quo G)$. When $v_i=n$ for all $i$ (i.e., $v=n\delta$),
this variety is identified with $(\h\oplus\h^*)/W$, where $W=G(\ell,1,n)$, see, e.g.,
\cite[7.7]{Gordon_review}. The identification can
be made equivariant with respect to the $\C^\times$-actions by dilations: the action on
$\mu^{-1}(0)\quo G$ is induced from the action on $T^*R$ by dilations.

Next, take a generic enough character $\theta$ of the group $G$. We can form the GIT reduction
$X^\theta(v):=T^*R\red_0^\theta G(=\mu^{-1}(0)^{\theta-ss}\quo G)$. This is a smooth symplectic variety
equipped with a natural morphism $X^\theta(v)\rightarrow X^0(v)$.
When $v=n\delta$, this morphism   is a resolution of singularities, see, e.g.,
\cite[7.8]{Gordon_review}.

One can also quantize the variety $X^0(n\delta)$ using the quantum Hamiltonian reduction.
Define the symmetrized quantum comoment map $\Phi(x):=\frac{1}{2}(x_R+x_{R^*}):\g\rightarrow D(R)$, where
$\g$ stands for the Lie algebra of $G$ and $D(R)$ for the algebra of linear differential operators on
$R$. Note that $\Phi(x)$ does not depend on the orientation of $Q$. Pick
a collection $\lambda=(\lambda_0,\ldots,\lambda_{\ell-1})$ of complex numbers
and view it as a character of $\g$ via $\langle \lambda,x\rangle:=\sum_{i=0}^{\ell-1}\lambda_i
\operatorname{tr}(x_i)$. Then set $$\A_\lambda(n\delta):=D(R)\red_\lambda G
(=[D(R)/D(R)\{\Phi(x)-\langle\lambda,x\rangle\}]^G).$$
This is a filtered algebra with $\gr\A_\lambda(n\delta)=\C[X^0(n\delta)]$. Moreover, we have a filtered algebra
isomorphism $\A_\lambda(n\delta)\cong e H_c(W)e$, where one recovers $\lambda$ from $c$ as follows.
We can encode the parameter $c$ as  $(\kappa, c_1,\ldots,c_{\ell-1})$, where $\kappa\in \C$
is the parameter corresponding to a reflection in $\mathfrak{S}_n$ and $c_i$ corresponds
to $i\in \Z/\ell\Z$.

According to \cite{Gordon_cyclic}, see also \cite[6.2]{quant_res}, we get
\begin{equation}\label{eq:param_corresp}
\begin{split}
&\lambda_k=\frac{1}{\ell}(1+2\sum_{j=1}^{\ell-1} c_j \exp(2\pi\sqrt{-1}j/\ell)), k\neq 0,\\
&\lambda_0=\frac{1}{\ell}(1+2\sum_{j=1}^{\ell-1}c_j)+\kappa-\frac{1}{2}.
\end{split}
\end{equation}

We can also produce quantizations of $X^\theta(v)$ using quantum Hamiltonian reduction.
Namely, we microlocalize the algebra $D(R)$  to a sheaf (in the conical topology) $D_R$ of filtered algebras
on $T^*R$. Then we set
$$\A_\lambda^\theta(v):=D_R\red^\theta_\lambda G(=[D_R/D_R\{\Phi(x)-\langle \lambda,x\rangle\}|_{(T^*R)^{\theta-ss}}]^G)$$ (note that this definition differs from \cite{BL}
by a shift of $\lambda$ because here we use the symmetrized quantum comoment map). This is a sheaf of filtered algebras of $X^\theta(v)$ with $\gr\A_\lambda^\theta(v)=\mathcal{O}_{X^\theta(v)}$ and $\Gamma(\A_\lambda^\theta(n\delta))=
\A_\lambda(n\delta)$. In general, we set $\A_\lambda(v):=\Gamma(\A_\lambda^\theta(v))$, the right hand
side does not depend on the choice of $\theta$ by \cite[3.3]{BPW}. We also set $\A^0_\lambda(v)=[D(R)/D(R)\{\Phi(x)-
\langle \lambda,x\rangle\}]^G,  X(v):=\operatorname{Spec}(\C[X^\theta(v)])$. Note that there is a natural
homomorphism $\A^0_\lambda(v)\rightarrow \A_\lambda(v)$. Let $\rho_v$ denote the natural morphism
$X^\theta(v)\rightarrow X(v)$, it is a resolution of singularities for any $v$. We often write $\rho$ instead
of $\rho_v$.



We can consider
the derived global section functor $$R\Gamma:D^b(\A_\lambda^\theta(v)\operatorname{-mod})\rightarrow
D^b(\A_\lambda(v)\operatorname{-mod}).$$ Here we write $\A_\lambda^\theta(v)\operatorname{-mod}$
for the category of all $\A_\lambda^\theta(v)$-modules that are direct limits of coherent modules
(``coherent'' is the same as ``good'' in \cite[4.5]{MN}). Note that $\A_\lambda^\theta(v)\operatorname{-mod}$
is the quotient of the category of the $(G,\lambda)$-equivariant $D(R)$-modules by the Serre
category of all $D$-modules with $\theta$-unstable support.

The functor $R\Gamma$ restricts to
$R\Gamma: D^b_{\rho^{-1}(0)}(\A_\lambda^\theta(v)\operatorname{-mod})\rightarrow
D^b_{fin}(\A_\lambda(v)\operatorname{-mod})$. Here the superscript ``$\rho^{-1}(0)$'' indicates
the category of all complexes with  coherent homology supported on $\rho^{-1}(0)$. Similarly,
the subscript ``fin'' means the category of all complexes with finite dimensional homology.

\subsection{Main result in the cyclotomic case}
Let us state our main result. Define the subalgebra $\a^{\lambda}\subset \g(Q)=\hat{\mathfrak{sl}}_\ell$
as follows. It is generated by the Cartan subalgebra $\mathfrak{t}$ of $\g(Q)$ and the root subspaces
$\g(Q)_\beta$ for real roots $\beta=\sum_{i\in Q_0}b_i \epsilon_i$ such that
$$\sum_{i\in Q_0} b_i (\lambda_i+\frac{1}{2}(w_i+\sum_{a, t(a)=i}v_{h(a)}+\sum_{a, h(a)=i}v_{t(a)}))\in \Z.$$

\begin{Thm}\label{Thm:cycl_main}
Suppose that
\begin{enumerate}
\item
$|\langle \lambda,\beta\rangle|\leqslant \frac{1}{2}|b_0|$ for all roots $\beta=(b_i)_{i\in Q_0}$
of $\a^{\lambda}$,
\item and $\langle \lambda,\delta\rangle-\frac{1}{2}$ is either in $(-1,0)$ or an integer or has denominator bigger
than $n$ (irrational numbers have denominator $+\infty$, by convention).\end{enumerate} Then the parameter
$c$ computed from $\lambda$ using (\ref{eq:param_corresp}) is totally aspherical.
\end{Thm}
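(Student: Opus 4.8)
The plan is to reduce Theorem~\ref{Thm:cycl_main} to the combinatorial criterion of Lemma~\ref{Lem:tot_asph_char}, namely to checking condition~(*): that $e_{W'}M'=0$ for every finite dimensional module $M'$ over $H_c(W')$ and every nontrivial parabolic $W'\subset W=G(\ell,1,n)$. The parabolic subgroups of $G(\ell,1,n)$ are products $\mathfrak{S}_{n_1}\times\cdots\times\mathfrak{S}_{n_k}\times G(\ell,1,m)$ with $\sum n_i+m\le n$, so~(*) splits into two families of conditions. First, for the symmetric-group factors we must rule out, for each $n_i\le n$, the existence of a finite dimensional $H_\kappa(\mathfrak{S}_{n_i})$-module not killed by the averaging idempotent; by the type $A$ analysis recalled in Subsection~\ref{SS_type_A} this holds precisely when $\kappa=\langle\lambda,\delta\rangle-\tfrac12$ (the type $A$ parameter, via~(\ref{eq:param_corresp})) either lies in $(-1,0)$, or is an integer, or has denominator exceeding $n$ --- which is exactly hypothesis~(2). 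Second, for each parabolic $G(\ell,1,m)$ with $m\le n$ we must show $e H_c(G(\ell,1,m))e$ has no finite dimensional modules; since restricting $c$ to such a parabolic corresponds to keeping the same $\lambda$ but lowering the dimension vector to $m\delta$ (with the co-framing unchanged), and since the hypotheses~(1),(2) are monotone in $n$, it suffices to treat $W'=W$ itself.

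So the heart of the argument is: under hypotheses~(1) and~(2), the algebra $e H_c(W)e=\A_\lambda(n\delta)$ has no finite dimensional representations, equivalently $D^b_{fin}(\A_\lambda(n\delta)\operatorname{-mod})=0$. Here is where the quantized quiver variety machinery and the results of \cite{BL} enter. I would first use $R\Gamma\colon D^b_{\rho^{-1}(0)}(\A_\lambda^\theta(n\delta)\operatorname{-mod})\to D^b_{fin}(\A_\lambda(n\delta)\operatorname{-mod})$: by the general theory any finite dimensional $\A_\lambda(n\delta)$-module lifts to a nonzero object supported on the central fiber $\rho^{-1}(0)$ of the resolution $X^\theta(n\delta)\to X(n\delta)$, so it is enough to show $D^b_{\rho^{-1}(0)}(\A_\lambda^\theta(n\delta)\operatorname{-mod})=0$. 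The vanishing of this category is controlled by the combinatorics of which root subspaces of $\hat{\mathfrak{sl}}_\ell$ act ``integrally'' --- this is exactly what the subalgebra $\a^\lambda$ encodes. The key input from \cite{BL} is a wall-crossing / localization criterion: $\A_\lambda^\theta(n\delta)$ has an empty category of modules supported on $\rho^{-1}(0)$ provided that abelian localization holds for $\lambda$ and for enough of its translates, and this in turn is guaranteed by the numerical inequality $|\langle\lambda,\beta\rangle|\le\tfrac12|b_0|$ ranging over roots $\beta=(b_i)$ of $\a^\lambda$ --- hypothesis~(1). Concretely I would invoke the classification in \cite{BL} of the dimension vectors $v$ for which $\A_\lambda^\theta(v)$ carries a module supported on the exceptional locus: such $v$ must be of the form (roughly) a sum of an ``honest'' root of $\a^\lambda$ plus a multiple of $\delta$ arising from a smaller reduction, and the constraint~(1) forces any such decomposition of $n\delta$ to be impossible except in the trivial way, while~(2) handles the residual $\delta$-part exactly as the type $A$ computation does.

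Assembling the pieces: hypothesis~(2) kills all type $A$ obstructions (for every $\mathfrak{S}_m$, $m\le n$) by the recollection from \cite{BEG} that the unique finite dimensional simple of $H_\kappa(\mathfrak{S}_m)$ is $e$-torsion iff $\kappa\in(-1,0)$, together with the observation that semisimplicity of $\OCat$ (denominator $>m$) or the absence of finite dimensional modules (non-rational, or integral $\kappa$ with the relevant normalization) disposes of the other cases; hypothesis~(1), via the \cite{BL} localization theorem applied to $\A_\lambda^\theta(m\delta)$ for each $m\le n$, kills all the genuinely cyclotomic obstructions by forcing $D^b_{\rho^{-1}(0)}(\A_\lambda^\theta(m\delta)\operatorname{-mod})=0$; hence condition~(*) holds for every nontrivial parabolic, and Lemma~\ref{Lem:tot_asph_char} yields that $c$ is totally aspherical.

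I expect the main obstacle to be the precise extraction of the numerical criterion~(1) from the structural results of \cite{BL}: one must match the normalization of the quantum comoment map (the paper uses the \emph{symmetrized} $\Phi$, differing from \cite{BL} by a shift of $\lambda$ by $\tfrac12(w_i+\sum v_{h(a)}+\sum v_{t(a)})$ at vertex $i$), correctly identify $\a^\lambda$ with the integral Weyl group / linkage data governing holonomic modules on $X^\theta(n\delta)$, and verify that the inequality $|\langle\lambda,\beta\rangle|\le\tfrac12|b_0|$ is exactly the condition ensuring $\lambda$ lies in a region where localization holds and no exceptional-fiber-supported objects exist. The type $A$ bookkeeping in hypothesis~(2) --- tracking $\langle\lambda,\delta\rangle-\tfrac12$ through~(\ref{eq:param_corresp}) and checking denominators against $n$ --- is routine by comparison.
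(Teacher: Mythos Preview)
Your reduction via Lemma~\ref{Lem:tot_asph_char}, the identification of the parabolic subgroups, and the handling of the type~$A$ factors under hypothesis~(2) all match the paper's argument. The gap is in how you treat the cyclotomic factor.

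You write that it suffices to show $D^b_{\rho^{-1}(0)}(\A_\lambda^\theta(n\delta)\operatorname{-mod})=0$, and later that hypothesis~(1) guarantees ``no exceptional-fiber-supported objects exist.'' This is false: for generic~$\theta$ the central fiber $\rho^{-1}(0)\subset X^\theta(n\delta)$ is nonempty and the category of $\A_\lambda^\theta(n\delta)$-modules supported there is \emph{never} zero --- indeed, the main theorem of~\cite{BL} identifies its simples with a crystal basis for an integrable $\a^\lambda$-module, and the weight $\omega_0-n\delta$ genuinely occurs. So the ``classification of dimension vectors~$v$ carrying a fiber-supported module'' that you invoke does not exclude $v=n\delta$; your proposed mechanism cannot work as stated. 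Relatedly, appealing to abelian localization is circular here: if localization holds, modules supported on $\rho^{-1}(0)$ correspond exactly to finite dimensional $\A_\lambda(n\delta)$-modules, so neither side is known to vanish.

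What the paper actually proves is the weaker (and correct) assertion that $R\Gamma$ \emph{annihilates} every simple supported on $\rho^{-1}(0)$; by Lemma~\ref{Lem:no_fin_equiv} this is equivalent to $\A_\lambda(n\delta)$ having no finite dimensional representations. The mechanism is the categorical $\a^\lambda$-action: for each simple root $\alpha$ of $\a^\lambda$ there are functors $E_\alpha,F_\alpha$ (built from Webster's $E_i,F_i$ after conjugating by an LMN isomorphism), and the crystal they induce has no highest-weight vectors at $v=n\delta$ because $\omega_0-n\delta$ is not extremal. Hence every simple $M$ on $\rho^{-1}(0)$ lies in the image of some $\tilde f_\alpha$. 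The numerical condition~(1), rewritten as $|(\sigma\lambda)_i|\le\tfrac12\tilde w_i'-v_i'$ after the LMN twist, is exactly what is needed in Lemma~\ref{Lem:part_glob_van} and Corollary~\ref{Cor:glob_vanish} to conclude that $R\Gamma$ kills the image of $\tilde f_\alpha$ (the argument goes through a translation-bimodule computation on Grassmannians). This is the missing idea in your sketch: hypothesis~(1) does not empty the fiber-supported category, it forces $R\Gamma$ to vanish on it.
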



\begin{Cor}\label{Rem:cycl_main}
Rouquier's conjecture mentioned in Subsection \ref{SS_shifts} is true for the groups $G(\ell,1,n)$:
every coset $c+\param_\Z$ contains a totally aspherical parameter.
\end{Cor}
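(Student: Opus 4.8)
The plan is to deduce Corollary \ref{Rem:cycl_main} from Theorem \ref{Thm:cycl_main} by exhibiting, in every coset $c+\param_\Z$, a representative $c'$ whose image $\lambda$ under (\ref{eq:param_corresp}) satisfies the two numerical conditions (1) and (2). The first observation is a translation of the lattice $\param_\Z$ into the $\lambda$-coordinates: shifting $c$ by an element of $\param_\Z$ corresponds, via (\ref{eq:param_corresp}), to adding an integer vector to $(\lambda_0,\dots,\lambda_{\ell-1})$ subject to the one constraint that keeps the Hecke parameter fixed; concretely $\langle\lambda,\delta\rangle=\sum_i\lambda_i$ changes by an integer, and the individual $\lambda_i$ change by integers. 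So within a fixed coset we are free to replace $\lambda$ by $\lambda+\nu$ for any $\nu\in\Z^{Q_0}$.

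Next I would exploit the freedom in two independent directions. For condition (2), note that $\langle\lambda,\delta\rangle-\tfrac12$ is determined modulo $\Z$ by the coset, so it is either an integer (if the class is $\tfrac12+\Z$... rather, if $\langle\lambda,\delta\rangle\in\tfrac12+\Z$) or a fixed rational/irrational number mod $\Z$; in all cases we may use the integral shift of $\langle\lambda,\delta\rangle$ to move its representative into $(-1,0)$ unless it is already forced to be an integer or has denominator larger than $n$, so condition (2) can always be arranged. The subtler point is condition (1): the subalgebra $\a^\lambda$ is itself defined in terms of $\lambda$ (through the integrality condition $\sum_i b_i(\lambda_i+\tfrac12(\cdots))\in\Z$), so changing $\lambda$ changes which roots $\beta$ one must test. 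The key is that $\a^\lambda$ depends only on $\lambda\bmod\Z^{Q_0}$ together with the framing/dimension data — the quantity $\tfrac12(w_i+\sum v_{h(a)}+\sum v_{t(a)})$ is fixed — so $\a^\lambda$ is constant along the coset. Thus we must find $\nu\in\Z^{Q_0}$ with $\sum_i\nu_i$ even (or appropriately constrained to preserve the Hecke parameter) such that $|\langle\lambda+\nu,\beta\rangle|\le\tfrac12|b_0|$ for all roots $\beta=(b_i)$ of this \emph{fixed} algebra $\a^\lambda$. For the real roots $\beta$ that enter, the integrality condition defining $\a^\lambda$ forces $\langle\lambda,\beta\rangle$ itself to differ from a controlled quantity by an integer; since the roots of an affine Kac–Moody algebra with $b_0\neq0$ have $|b_0|$ growing, only finitely many roots have $|b_0|$ bounded by any constant, and the real roots $\beta$ of $\a^\lambda$ with $b_0=0$ lie in a finite root system (the $\mathfrak{sl}_\ell$ part), for which the inequality $|\langle\lambda+\nu,\beta\rangle|\le 0$ demands $\langle\lambda+\nu,\beta\rangle=0$, i.e.\ $\lambda+\nu$ must be orthogonal to the finite part of $\a^\lambda$.

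Concretely, then, I would argue as follows. The roots of $\a^\lambda$ with $b_0=0$ span a sub-root-system $\Phi_{\mathrm{fin}}$ of the finite $A_{\ell-1}$ system; their integrality condition says $\langle\lambda,\beta\rangle\in\Z$ for $\beta\in\Phi_{\mathrm{fin}}$, and for such $\beta$ condition (1) reads $\langle\lambda,\beta\rangle=0$. So I need a representative with $\lambda\perp\Phi_{\mathrm{fin}}$; since $\langle\lambda,\beta\rangle$ is already an integer for these $\beta$ and the integral shifts $\nu$ surject onto $\Z^{\Phi_{\mathrm{fin}}}$ after pairing, I can kill all these pairings simultaneously. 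Having fixed $\lambda$ on the finite part, the remaining roots of $\a^\lambda$ all have $b_0\neq0$, hence $|b_0|\ge1$, and for each such $\beta$ the integrality condition pins down $\langle\lambda,\beta\rangle$ in a coset of $\Z$ of the form (fixed rational)$+\Z$; using the remaining integral freedom (shifts in the direction of $\delta$, say, and shifts that don't affect the finite pairings) I can move $\langle\lambda,\beta\rangle$ into $[-\tfrac12,\tfrac12]$, and because $|b_0|\ge 1$ this gives $|\langle\lambda,\beta\rangle|\le\tfrac12\le\tfrac12|b_0|$; one checks the same shift works for all such $\beta$ at once because it amounts to a single adjustment of $\langle\lambda,\delta\rangle$ modulo $\Z$ plus the already-available freedom. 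Finally one verifies the shift used here is compatible with (and can be combined with) the shift arranged for condition (2). The main obstacle I anticipate is precisely the interaction between conditions (1) and (2) and the bookkeeping of which integral directions remain available after imposing $\lambda\perp\Phi_{\mathrm{fin}}$ — one has to make sure the remaining lattice of allowed shifts still surjects onto the relevant quotients so that both the $\delta$-pairing can be placed in $(-1,0)$ and the imaginary-type root pairings can be bounded by $\tfrac12|b_0|$, which ultimately comes down to the standard fact that for an affine root system the only roots with bounded $|b_0|$ form a finite set, so finitely many linear inequalities must be solved over a full-rank sublattice. Once that finiteness is in hand, Theorem \ref{Thm:cycl_main} applies to the representative $c'$ and gives the claim.
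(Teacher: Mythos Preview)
Your high-level setup is sound: under (\ref{eq:param_corresp}) the lattice $\param_\Z$ becomes all of $\Z^{Q_0}$ with no further constraint (the remark about $\sum_i\nu_i$ even is spurious), and $\a^\lambda$ is indeed constant on the coset. The gap is in condition (1). Your plan to force $\langle\lambda+\nu,\beta\rangle\in[-\tfrac12,\tfrac12]$ for every root $\beta$ of $\a^\lambda$ with $b_0\neq 0$ cannot work: for $\beta=\alpha+m\delta$ one has $\langle\lambda+\nu,\beta\rangle=\langle\lambda+\nu,\alpha\rangle+m\langle\lambda+\nu,\delta\rangle$, which is unbounded in $m$ unless $\langle\lambda+\nu,\delta\rangle=0$. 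More to the point, your finiteness reduction is invalid: since the right-hand side $\tfrac12|b_0|$ of (1) scales with $|b_0|$, the inequality for $\beta=\alpha+m\delta$ reads, after dividing, $|\langle\lambda+\nu,\alpha\rangle/m+\langle\lambda+\nu,\delta\rangle|\leqslant\tfrac12$ and never becomes vacuous as $m\to\infty$. You are therefore facing an infinite system of coupled constraints, and the appeal to ``finitely many linear inequalities over a full-rank sublattice'' is not substantiated; this is precisely the ``bookkeeping'' you flag as the main obstacle, and your proposed heuristic does not resolve it.

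The paper avoids this by passing to the \emph{simple} roots $\beta_1,\dots,\beta_k$ of $\a^\lambda$, chosen as positive roots of $\g(Q)$. Since every positive root of $\a^\lambda$ is $\sum m_i\beta_i$ with $m_i\geqslant 0$, and since both $\langle\lambda',\cdot\rangle$ and $(\omega_0,\cdot)$ are linear, the two-sided bounds $0\leqslant\langle\lambda'+\omega_0^\vee/2,\beta_i\rangle\leqslant(\omega_0,\beta_i)$ on the simple roots propagate to (1) for all roots. This is now a genuine finite system of \emph{integer} inequalities (the integrality of $\langle\lambda'+\omega_0^\vee/2,\beta_i\rangle$ is exactly the defining condition for $\beta_i$ to lie in $\a^\lambda$), and the paper solves it concretely: after a Weyl-group conjugation one writes $\beta_i=\alpha_i+d_i\delta$ and reduces everything to the single quantity $s=\sum_i x_i$, which is pinned down by summing the inequalities. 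A further structural point you miss is that when the denominator $d$ of $\langle\lambda,\delta\rangle$ is at most $n$, the root $d\delta$ decomposes as a sum of the $\beta_i$, so the same finite system already forces (2); conditions (1) and (2) are thus handled simultaneously rather than reconciled afterward. The reduction to simple roots is the missing idea in your sketch.
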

\begin{proof}
Note that, under the isomorphism $\param\cong \C^{Q_0}$ given by (\ref{eq:param_corresp}), the lattice
$\param_{\Z}$ coincides with  $\Z^{Q_0}$,  a computation is somewhat implicitly contained,  e.g., in
\cite[2.3.1]{GL}. So  it is enough
to show that every $\lambda\in \C^{Q_0}$ admits an integral shift satisfying the conditions of
Theorem \ref{Thm:cycl_main}.

If $\a^{\lambda}=\mathfrak{t}$, then the proof is easy, we just need to achieve condition (2).
Assume $\a^{\lambda}\neq \mathfrak{t}$.
Choose a system of simple roots, $\beta_1,\ldots,\beta_k$, for $\a^{\lambda}$ consisting of positive roots.
We claim that it is enough to show that there is an element $\lambda'\in \lambda+\Z^{Q_0}$ satisfying
\begin{equation}\label{eq:ineq} 0\leqslant \langle \lambda'+\omega_0^\vee/2, \beta_i\rangle
\leqslant  (\omega_0, \beta_i), \quad \forall i=1,\ldots,k.\end{equation} Here
$\omega_0^\vee$ denotes the fundamental coweight corresponding to $0$ and $(\cdot,\cdot)$ stands for
the usual invariant scalar  product. Note that $\langle \lambda'+\omega_0^\vee/2, \beta_i\rangle$ is an integer.
Clearly, (\ref{eq:ineq}) implies (1). Now suppose the denominator $d$ of $\langle \lambda,\delta\rangle$
satisfies $d\leqslant n$. Then $d\delta$ is integral on $\lambda$ and hence $(\omega_0,\beta_i)\in
\{0,\ldots,d-1\}$.  It follows that $d\delta$ is the sum of some of $\beta_i$ and hence
(\ref{eq:ineq}) also implies (2).

For simplicity, consider the case when $k=\ell$, the general case ($k<\ell$) is analogous.
We can apply an element from $\mathfrak{S}_\ell$ to $\lambda$, this does not change the inequalities
(\ref{eq:ineq}). Thanks to this, we can assume that $\beta_i=\alpha_i+d_i\delta, i=1,\ldots,\ell$.
Set $x_i:=\langle \lambda'+\frac{\omega_0^\vee}{2},\alpha_i\rangle$. Then (\ref{eq:ineq}) can be
rewritten as
\begin{equation}\label{eq:ineq1} 0\leqslant x_i+d_i(x_1+\ldots+x_\ell)\leqslant d_i+ \delta_{i,\ell}.
\end{equation}
Set $s:=x_1+\ldots+x_\ell$ and observe that $1+\sum_{i=1}^\ell d_i=d$. So $s$ and $d$ are coprime integers.
Summing the inequalities (\ref{eq:ineq1}), we get $0\leqslant ds\leqslant d$. This specifies
$s$ uniquely (with the exception of the non-interesting case $d=1$). One can then rewrite
(\ref{eq:ineq1}) as
\begin{equation}\label{eq:ineq2}
\begin{split}
& 0\leqslant x_i+d_is\leqslant d_i,\quad i=1,\ldots,\ell-1,\\
& 0\leqslant ds- \sum_{i=1}^{\ell-1} (x_i+d_is)\leqslant d-\sum_{i=1}^{\ell-1} d_i
\end{split}
\end{equation}
It is now clear that these inequalities have a solution.
\end{proof}


Let us describe key ideas of the proof of Theorem \ref{Thm:cycl_main}. Lemma \ref{Lem:tot_asph_char} reduces the problem to
checking the absence of the finite dimensional representations. To show that $\A_\lambda(n\delta)$
has no finite dimensional representations it is enough to prove that $R\Gamma(M)=0$
for any $\A_\lambda^\theta(n\delta)$-module $M$ supported on $\rho^{-1}_{n\delta}(0)$.
The structure of the irreducible $\A_\lambda^\theta(v)$-modules supported on $\rho^{-1}_v(0)$
was studied in \cite{BL}. We will see that under condition (1), the algebra $\A_\lambda(n\delta)$
has no finite dimensional representations, this is a key step. From here and results of Subsection \ref{SS_type_A}
we will see that (1) and (2) guarantee the absence of the finite dimensional representations
of the slice algebras (which in this case means the spherical subalgebras for the parabolic
subgroups).

Let us discuss our key step in more detail. In \cite{BL} we have shown that the category
$\bigoplus_{v} D^b(\A_{\lambda_v}^\theta(v)\operatorname{-mod}_{\rho_v^{-1}(0)})$ carries
a categorical action (in a weak sense) of $\a^{\lambda}$ (here the parameters $\lambda_v$ depend on $v$
in a suitable way). This means that there are
endo-functors $E_\alpha,F_\alpha$ indexed by simple roots $\alpha$ of $\a^{\lambda}$ that define a
usual representation of $\a^\lambda$ on $K_0$. Each pair $E_\alpha,F_\alpha$ defines a categorical
$\mathfrak{sl}_2$-action in the strong sense (of Rouquier, \cite{Rouquier_2Kac}). Moreover, $E_\alpha,F_\alpha$
define an $\mathfrak{sl}_2$-crystal structure on the set of simples with operators $\tilde{e}_\alpha, \tilde{f}_\alpha$.
We will see that the condition $|\langle \lambda,\alpha\rangle|\leqslant \frac{1}{2}( \omega_0, \alpha)$ guarantees
that we have $R\Gamma(M)=0$ for a simple $M$ lying in the image of $\tilde{f}_\alpha$.
We will first do this when $\alpha$ is a simple root for $\g(Q)$, Subsection \ref{SS_EF_i_func}, and then extend this result to the general case.

\begin{Rem}\label{Rem:more_tot_asph}
It does not seem that Theorem \ref{Thm:cycl_main} describes all totally aspherical parameters: it should not
be true that condition (1) specifies precisely the parameters where $\A_\lambda(n\delta)$ has no finite dimensional
representations. \cite[Conjecture 9.8]{BL} reduces the question of when $\A_\lambda(n\delta)$ has no finite dimensional
representation to a problem in the integrable representation theory of $\mathfrak{a}^\lambda$.
\end{Rem}

\begin{Rem}
In the case when $n=1$ all parameters $c$ where $P_{KZ,c}\cong \mathfrak{H}_c$ were described in
\cite{Thelin}.
\end{Rem}

\subsection{Functors $E_i,F_i$}\label{SS_EF_i_func}
An important role in the proof of the main theorem of \cite{BL} is played by Webster's functors $E_i,F_i$ between
the derived categories of modules over the sheaves $\A_\lambda^\theta(v)$.

Fix $i\in Q_0$ such that $\lambda_i\in \Z+\frac{1}{2}(w_i+\sum_{a,t(a)=i}v_{h(a)}+\sum_{a, h(a)=i}v_{t(a)})$  and
assume that $\theta_k>0$ for all $k\in Q_0$.
We are going to recall the functors
$$F_i: D^b(\A_\lambda^\theta(v)\operatorname{-mod})\rightleftarrows D^b(\A_{\lambda'}^\theta(v+\epsilon_i)\operatorname{-mod}):E_i$$
introduced in \cite{Webster}. Here $\lambda'$ is a parameter recovered from $\lambda$ (below we will
explain how).

Assume $i$ is a source (otherwise we can invert some arrows, this does not change $\lambda$
and the sheaf $\A_\lambda^\theta(v)$). Set $\tilde{w}_i:=w_i+\sum_{a, t(a)=i}v_{h(a)}$.
Consider the reduction $\A^{\theta_i}_{\lambda_i}(v,i):=D_R\red^{\theta_i}_{\lambda_i}\GL(V_i)$.
Since $i$ is a source and $\theta_i>0$,
this reduction is $$D^{\tilde{\lambda}_i}_{\operatorname{Gr}(v_i,\tilde{w}_i)}\otimes D_{\underline{R}},$$
where the notation is as follows. We write $\underline{R}$ for $$\bigoplus_{a, t(a)\neq i}\operatorname{Hom}(V_{t(a)},V_{h(a)})\oplus \bigoplus_{j\neq i}\operatorname{Hom}(V_j,\C^{w_j}),$$
where $w_j=\delta_{j0}$. So $R=\operatorname{Hom}(V_i, \C^{\tilde{w}_i})\oplus \underline{R}$.
The superscript $\tilde{\lambda}_i$ above indicates the  differential operators with coefficients in
the line bundle $\mathcal{O}(\tilde{\lambda}_i)$ on $\operatorname{Gr}(v_i,\tilde{w}_i)$, where  $\tilde{\lambda}_i:=\lambda_i-\tilde{w}_i/2$. The category $\A_\lambda^\theta(v)\operatorname{-mod}$
is the quotient of $D_R\red^{\theta_i}_{\lambda_i} \GL(V_i)\operatorname{-mod}^{\underline{G},\underline{\lambda}}$
by a Serre subcategory, the superscript $(\underline{G},\underline{\lambda})$ means ``$\underline{\lambda}$-twisted $\underline{G}$-equivariant
modules''. Here we write $\underline{G}$ for $\prod_{j\neq i}\GL(V_i), \underline{\lambda}:=(\tilde{\lambda}_j)_{j\neq i}$. The formula for
$\tilde{\lambda}_j$ is
$$\tilde{\lambda}_j:=\lambda_j-\frac{1}{2}(w_j+\sum_{t(a)=j}v_{h(a)}-\sum_{h(a)=j}v_{t(a)}).$$
Note that $\A^\theta_\lambda(v)$ is a quantum Hamiltonian reduction of $D(R)$ for the quantum comoment
map $x\mapsto x_R$ and level $\tilde{\lambda}=(\tilde{\lambda}_j)_{j\in Q_0}$.
The Serre subcategory in $\A_{\lambda_i}^{\theta_i}(v,i)\operatorname{-mod}^{\underline{G},\underline{\lambda}}$
we have to mod out consists of all modules whose singular support is contained in the
image of $\mu_i^{-1}(0)^{\theta_i-ss}\setminus \mu^{-1}(0)^{\theta-ss}$ in $T^*R\red^{\theta_i}\GL(V_i)$.

One can define functors
\begin{equation}\label{eq:funs} F_i:D^b_{{\underline{G},\underline{\lambda}}}(\A_{\lambda_i}^{\theta_i}(v,i)\operatorname{-mod})
\rightleftarrows
D^b_{\underline{G},\underline{\lambda}}(\A_{\lambda_i}^{\theta_i}(v+\epsilon_i,i)\operatorname{-mod}):E_i\end{equation}
in a standard way (pull-push), using the correspondence $\operatorname{Fl}(v_i,v_{i}+1,\tilde{w}_i)$
consisting of 2 step flags (or more precisely, $\operatorname{Fl}(v_i,v_{i}+1,\tilde{w}_i)\times
\delta_{\underline{R}}$, where $\delta_{\underline{R}}$ stands for the diagonal in $\underline{R}$).
Here we write $D^b_{{\underline{G},\underline{\lambda}}}(\bullet)$ for the $(\underline{G},\underline{\lambda})$-equivariant
derived category in the sense of Bernstein and Lunts. Of course, the functors $E_i,F_i$ have sense for the non-equivariant
derived categories as well.

The category $D^b(\A_\lambda^\theta(v)\operatorname{-mod})$
is the quotient of $D^b_{{\underline{G},\underline{\lambda}}}(\A_{\lambda_i}^{\theta_i}(v,i)\operatorname{-mod})$
by the full subcategory of all complexes whose homology satisfy the support condition mentioned above.

As Webster checked in \cite{Webster}, the functors $E_i,F_i$ descend to functors between the
quotient categories $D^b(\A_\lambda^\theta(v)\operatorname{-mod}), D^b(\A_{\lambda'}^\theta(v+\epsilon_i)\operatorname{-mod})$.
Here we write $\lambda'$ for the parameter producing the vector $\tilde{\lambda}$ but for the dimension
$v+\epsilon_i$, we have  $\lambda'_k=\lambda_k+n_{ik}/2$, where
$n_{ik}$ is the number of arrows connecting $i$ to $k$.

Let us write $R\Gamma_i$ for the derived global section functor $$D^b_{\underline{G},\underline{\lambda}}(\A_{\lambda_i}^{\theta_i}(v,i)\operatorname{-mod})
\rightarrow D^b_{\underline{G},\underline{\lambda}}(\A_{\lambda_i}(v,i)\operatorname{-mod}),$$ where
$\A_{\lambda_i}(v,i):=\Gamma(\A_{\lambda_i}^{\theta_i}(v,i))$. We have
$$\A_{\lambda_i}(v,i)=D^{\tilde{\lambda}_i}(\operatorname{Gr}(v_i,\tilde{w}_i))\otimes D(\underline{R}).$$

\begin{Lem}\label{Lem:part_glob_van}
Suppose $2v_i\leqslant \tilde{w}_i$ (and still $i$ is a source). If $|\lambda_i|\leqslant \tilde{w}_i/2-v_i$, then $R\Gamma_i$ annihilates every  $\A_{\lambda_i}^{\theta_i}(v,i)$-module $M$ that appears as a subquotient in
the homology of a complex lying in the image of $F_i$.
\end{Lem}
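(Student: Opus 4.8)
The plan is to rewrite the statement as a vanishing of derived global sections of twisted $D$-modules on a Grassmannian, and then, using the flag correspondence that defines $F_i$, to reduce it to a fibrewise Borel--Weil--Bott computation on a projective space. Since $i$ is a source and $\theta_i>0$, we have the identifications $\A_{\lambda_i}^{\theta_i}(v,i)=D^{\tilde\lambda_i}_{\operatorname{Gr}(v_i,\tilde w_i)}\otimes D_{\underline{R}}$ and $\A_{\lambda_i}(v,i)=D^{\tilde\lambda_i}(\operatorname{Gr}(v_i,\tilde w_i))\otimes D(\underline{R})$ recalled above; viewing an $\A_{\lambda_i}^{\theta_i}(v,i)$-module as a $\theta_i$-stable $D^{\tilde\lambda_i}$-module on $\operatorname{Gr}(v_i,\tilde w_i)\times\underline{R}$, the functor $R\Gamma_i$ is the $D$-module pushforward along the (proper) projection to $\underline{R}$, and as $\underline{R}$ is an affine space this factor contributes no higher cohomology and is harmless. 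So it is enough to show $R\Gamma(\operatorname{Gr}(v_i,\tilde w_i),M)=0$ for the relevant coherent $D^{\tilde\lambda_i}$-modules $M$, where $R\Gamma$ now denotes $D$-module global sections on the Grassmannian.

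I would then recall that, after possibly reversing the arrows at $i$ (which changes neither $\lambda$ nor the sheaf $\A_\lambda^\theta(v)$), the functor $F_i$ with target $\A_{\lambda_i}^{\theta_i}(v,i)\operatorname{-mod}$ is the pull-push $N\mapsto\pi_{2*}(\pi_1^*N\otimes\mathcal{L})$ along the two-step flag variety $\operatorname{Fl}(v_i-1,v_i,\tilde w_i)$, where $\pi_1$ is a $\mathbb{P}^{\tilde w_i-v_i}$-bundle over $\operatorname{Gr}(v_i-1,\tilde w_i)$, $\pi_2$ is a $\mathbb{P}^{v_i-1}$-bundle over $\operatorname{Gr}(v_i,\tilde w_i)$, and $\mathcal{L}$ is the line bundle determined by $\tilde\lambda_i$. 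Since $\pi_2$ is a projective-space bundle, $\pi_{2*}\pi_2^*M\cong M$ by the projection formula, so $R\Gamma(\operatorname{Gr}(v_i,\tilde w_i),M)\cong R\Gamma(\operatorname{Fl}(v_i-1,v_i,\tilde w_i),\pi_2^*M)$; pushing down along $\pi_1$ and using smooth base change, this vanishes as soon as $R\Gamma(\pi_1^{-1}(V'),(\pi_2^*M)|_{\pi_1^{-1}(V')})=0$ for every $V'\in\operatorname{Gr}(v_i-1,\tilde w_i)$. The fibre $\pi_1^{-1}(V')$ is carried by $\pi_2$ isomorphically onto the sub-Grassmannian $Z_{V'}=\{V''\in\operatorname{Gr}(v_i,\tilde w_i):V'\subseteq V''\}\cong\mathbb{P}^{\tilde w_i-v_i}$, and $\mathcal{O}_{\operatorname{Gr}(v_i,\tilde w_i)}(1)$ restricts to $\mathcal{O}_{\mathbb{P}^{\tilde w_i-v_i}}(1)$ on it, so $(\pi_2^*M)|_{\pi_1^{-1}(V')}$ is the $*$-restriction $M|_{Z_{V'}}$, a twisted $D$-module on $\mathbb{P}^{\tilde w_i-v_i}$ with twist $\tilde\lambda_i$; we are reduced to proving $R\Gamma(\mathbb{P}^{\tilde w_i-v_i},M|_{Z_{V'}})=0$.

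The hypotheses now enter, together with the structure theory of \cite{BL}. Since $i$ is a source, the integrality assumption on $\lambda_i$ gives $\tilde\lambda_i=\lambda_i-\frac12\tilde w_i\in\Z$, and the inequalities $2v_i\leqslant\tilde w_i$ and $|\lambda_i|\leqslant\frac12\tilde w_i-v_i$ become $-(\tilde w_i-v_i)\leqslant\tilde\lambda_i\leqslant-v_i$; in particular $\tilde\lambda_i$ lies in the range $\{-1,\dots,-(\tilde w_i-v_i)\}$ for which $H^\bullet(\mathbb{P}^{\tilde w_i-v_i},\mathcal{O}(\tilde\lambda_i))=0$. On the other hand, by the description in \cite{BL} of the $D$-modules built by $F_i$, such a module --- hence any subquotient of its homology --- carries its whole singular support on a Lagrangian along which each fibre of $\pi_1$ is non-characteristic; therefore $M|_{Z_{V'}}$ is lisse, and since $Z_{V'}\cong\mathbb{P}^{\tilde w_i-v_i}$ is simply connected it is a successive extension of cohomological shifts of the twisted structure sheaf $\mathcal{O}_{Z_{V'}}=\mathcal{O}_{\mathbb{P}^{\tilde w_i-v_i}}(\tilde\lambda_i)$. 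Combining the two, $R\Gamma(Z_{V'},M|_{Z_{V'}})=0$ for every $V'$, hence $R\Gamma(\operatorname{Gr}(v_i,\tilde w_i),M)=0$, which is what we wanted.

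The step I expect to be the main obstacle is this last input from \cite{BL}: controlling the singular support --- in particular, the non-characteristicity of the $\pi_1$-fibres --- of an \emph{arbitrary} subquotient of the homology of an object in the image of $F_i$, rather than of the image objects themselves. Concretely, one must rule out that such a subquotient picks up a ``smaller'' piece of singular support, meeting some $Z_{V'}$ in a proper subvariety on which the negativity of $\tilde\lambda_i$ no longer suffices for Borel--Weil--Bott vanishing; this is precisely the kind of statement the local analysis near points of $\rho^{-1}(0)$ in \cite{BL} provides. Granting it, the rest is bookkeeping: the identification of $\mathcal{L}$ with the twist $\tilde\lambda_i$, the base-change isomorphism, and the cohomology of line bundles on projective space.
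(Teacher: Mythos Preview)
Your strategy diverges sharply from the paper's, and the step you flag as the main obstacle is in fact a genuine gap: the non-characteristicity claim is \emph{false}, not merely unproved. Take $N=\delta_{V'_0}$, the simple $D$-module supported at a point $V'_0\in\operatorname{Gr}(v_i-1,\tilde{w}_i)$. Then $\pi_1^*N$ is supported on the fibre $\pi_1^{-1}(V'_0)$, and since $\pi_2$ restricts to an isomorphism $\pi_1^{-1}(V'_0)\xrightarrow{\sim}Z_{V'_0}$, one gets $F_i(N)=i_{Z_{V'_0},*}\mathcal{O}_{Z_{V'_0}}$ (with the appropriate twist). Take $M=F_i(N)$ itself. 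For $V'\neq V'_0$ with $\dim(V'+V'_0)=v_i$, the subvarieties $Z_{V'}$ and $Z_{V'_0}$ meet transversally at the single point $V'+V'_0$, so the $D$-module restriction $M|_{Z_{V'}}$ is a delta module at one point of $\mathbb{P}^{\tilde w_i-v_i}$, certainly not lisse, and its cohomology is nonzero regardless of $\tilde\lambda_i$. So your fibrewise vanishing fails already for objects in the image of $F_i$, before passing to subquotients. The invocation of \cite{BL} does not help: nothing there constrains the singular support of $F_i$ applied to an arbitrary module, and the lemma is stated at the level of $\A_{\lambda_i}^{\theta_i}(v,i)$ (i.e., before the further Hamiltonian reduction by $\underline{G}$), where there is no $\rho^{-1}(0)$-support hypothesis in play.

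The paper avoids this difficulty entirely by working on the algebra side. One twists by $\mathcal{O}(-\tilde\lambda_i)$ to land in the untwisted category, where Beilinson--Bernstein gives an equivalence $\Gamma_i:\A^{\theta_i}_{\tilde w_i/2}(v,i)\operatorname{-mod}\xrightarrow{\sim}\A_{\tilde w_i/2}(v,i)\operatorname{-mod}$. Under this equivalence $F_i$ becomes $\mathcal{B}\otimes^L_{D(\operatorname{Gr}(v_i-1,\tilde w_i))}\bullet$ for a complex $\mathcal{B}$ of Harish-Chandra bimodules between the two rings of global differential operators. The hypothesis $2v_i\leqslant\tilde w_i$ forces $\operatorname{GKdim}D(\operatorname{Gr}(v_i-1,\tilde w_i))<\operatorname{GKdim}D(\operatorname{Gr}(v_i,\tilde w_i))$, so every HC $D(\operatorname{Gr}(v_i,\tilde w_i))$-$D(\operatorname{Gr}(v_i-1,\tilde w_i))$-bimodule is killed by a fixed proper ideal $I\subset D(\operatorname{Gr}(v_i,\tilde w_i))$. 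Writing $R\Gamma_i(M)=\operatorname{RHom}_A(B,\Gamma_i(M'))$ with $B$ the translation bimodule, the problem reduces to $B/IB=0$, and this is where the inequality $|\lambda_i|\leqslant\tfrac12\tilde w_i-v_i$ enters: it guarantees that the slice algebras $D^{\tilde\lambda_i+k}(\operatorname{Gr}(v_i-k,\tilde w_i-2k))$ have no finite-dimensional representations, forcing the restriction of $B/IB$ to vanish everywhere. Note that this argument is uniform in $M$ --- it never looks at the singular support of $M$ --- which is why it handles arbitrary subquotients without the obstruction you ran into.
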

\begin{proof}

It is enough to prove this claim for the usual (non-equivariant) derived categories.

Let $M'\in \A_{\tilde{w}_i/2}^{\theta_i}(v,i)\operatorname{-mod}$ be the twist of $M$ by the line bundle
$\mathcal{O}(-\tilde{\lambda}_i)$ on  $\operatorname{Gr}(v_i,\tilde{w}_i)$.
It follows from the Beilinson-Bernstein theorem that $\Gamma_i: \A^{\theta_i}_{\tilde{w}_i/2}(v,i)\operatorname{-mod}
\rightarrow \A_{\tilde{w}_i/2}(v,i)\operatorname{-mod}$
is a category equivalence.

The functor $F_i: D^b(\A_{\tilde{w}_i/2}(v-\epsilon_i,i)\operatorname{-mod})\rightarrow
D^b(\A_{\tilde{w}_i/2}(v,i)\operatorname{-mod})$ can be realized as
a tensor product $\mathcal{B}\otimes^L_{D(\operatorname{Gr}(v_i-1,\tilde{w}_i))}\bullet$, where
$\mathcal{B}$ is a complex of $D(\operatorname{Gr}(v_i,\tilde{w}_i))$-$D(\operatorname{Gr}(v_i-1,\tilde{w}_i))$-
bimodules with Harish-Chandra homology (both algebras are quotients of $U(\mathfrak{gl}(\tilde{w}_i))$
and so the notion of a HC bimodule does make sense). Under our assumption on $v_i$, the Gelfand-Kirillov
dimension of $D(\operatorname{Gr}(v_i-1,\tilde{w}_i))$ is less than that of $D(\operatorname{Gr}(v_i,\tilde{w}_i))$.
It follows that there is a proper two-sided ideal $I$ in $D(\operatorname{Gr}(v_i,\tilde{w}_i))$
annihilating all Harish-Chandra $D(\operatorname{Gr}(v_i,\tilde{w}_i))$-$D(\operatorname{Gr}(v_i-1,\tilde{w}_i))$-bimodules
(there is a minimal nonzero two-sided ideal and we can take it for $I$).

Note that $R\Gamma_i(M)=\operatorname{RHom}_{A}(B,\Gamma_i(M'))$, where we write $A$
for $D(\operatorname{Gr}(v_i,\tilde{w}_i))$ and $B$ for the translation
$D(\operatorname{Gr}(v_i,\tilde{w}_i))$-$D^{\tilde{\lambda}_i}(\operatorname{Gr}(v_i,\tilde{w}_i))$-bimodule.
Then $$\operatorname{RHom}_{A}(B,\Gamma_i(M'))=\operatorname{RHom}_{A/I}(B/IB, \Gamma_i(M')).$$ So it remains
to check that $B/IB=0$.

Assume the contrary. The bimodule $B$ and the ideal $I$ are HC bimodules. Applying a suitable restriction
functor, see \cite[5.4]{BL}, to $B/IB$, we get a nonzero finite dimensional bimodule over the slice
algebras to $D^{\tilde{\lambda}_i}(\operatorname{Gr}(v_i,\tilde{w}_i)), D(\operatorname{Gr}(v_i,\tilde{w}_i))$.
Recall that a slice algebra to $D^{\lambda_i'}(v_i,\tilde{w}_i)$ for $\lambda'_i\in \C$
has the form $D^{\lambda_i'+k}(v_i-k, \tilde{w}_i-2k)$, this follows, for example, from \cite[5.4]{BL}.
From  $|\lambda_i|\leqslant \tilde{w}_i/2-v_i$, we see that the slice algebras for
$D^{\lambda_i}(\operatorname{Gr}(v_i,\tilde{w}_i))$
have no finite dimensional representations. This proves $B=IB$ and completes the proof of the lemma.
\end{proof}

The lemma has the following important corollary.

\begin{Cor}\label{Cor:glob_vanish}
Suppose $2v_i\leqslant \tilde{w}_i$ (and still $i$ is a source). If $|\lambda_i|\leqslant \tilde{w}_i/2-v_i$, then
$R\Gamma_i$ annihilates every simple $\A_{\lambda}^{\theta}(v)$-module $M_0$ that appears
in the homology of $F_i N_0$, where $N_0$ is a simple $\A_{\lambda}^{\theta}(v-\epsilon_i)$-module
supported on $\rho_{v-\epsilon_i}^{-1}(0)$.
\end{Cor}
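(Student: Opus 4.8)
The plan is to deduce Corollary \ref{Cor:glob_vanish} from Lemma \ref{Lem:part_glob_van} by translating the global-sections computation on the quiver variety $X^\theta(v)$ into a computation over the ``one-vertex'' reduction $\A_{\lambda_i}^{\theta_i}(v,i)$, to which the lemma applies directly. First I would recall that, as set up above, $\A_\lambda^\theta(v)\operatorname{-mod}$ is a Serre quotient of $D^b_{\underline G,\underline\lambda}(\A_{\lambda_i}^{\theta_i}(v,i)\operatorname{-mod})$ by the subcategory of complexes whose homology has singular support inside the ``extra unstable'' locus, and that the derived global sections functor $R\Gamma$ on $X^\theta(v)$ factors through $R\Gamma_i$ followed by the further reduction by $\underline G$ at level $\underline\lambda$. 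In particular, if $R\Gamma_i(M)=0$ for every subquotient $M$ of the homology of a given complex, then $R\Gamma$ of that complex vanishes as well, since the residual reduction functor is exact-ish in the relevant sense (it is a composition of an invariants functor and a quotient functor, both of which kill zero objects). So it suffices to check that every simple $\A_\lambda^\theta(v)$-module $M_0$ appearing in $H^\bullet(F_iN_0)$ lifts to (or is a subquotient of) a module in the image of the functor $F_i$ of Lemma \ref{Lem:part_glob_van}, and that the hypothesis $2v_i\le\tilde w_i$, $|\lambda_i|\le\tilde w_i/2-v_i$ carries over verbatim.

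The key point to establish is the compatibility of the two incarnations of $F_i$: the ``global'' Webster functor $F_i:D^b(\A_\lambda^\theta(v-\epsilon_i)\operatorname{-mod})\to D^b(\A_{\lambda'}^\theta(v)\operatorname{-mod})$ used in the statement, and the ``local'' functor (\ref{eq:funs}) on the $(\underline G,\underline\lambda)$-equivariant categories over $\A_{\lambda_i}^{\theta_i}(v,i)\operatorname{-mod})$ used in Lemma \ref{Lem:part_glob_van}. Both are defined by the same pull-push along $\operatorname{Fl}(v_i,v_i+1,\tilde w_i)\times\delta_{\underline R}$, and the quotient functors to $D^b(\A_\bullet^\theta(\bullet)\operatorname{-mod})$ intertwine them by Webster's result recalled above; hence a simple module $M_0$ in $H^\bullet$ of the global $F_iN_0$ is the image under the quotient functor of a subquotient $M$ of $H^\bullet$ of the local $F_i\tilde N_0$, where $\tilde N_0$ is any lift of $N_0$. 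Lemma \ref{Lem:part_glob_van} then gives $R\Gamma_i(M)=0$, and therefore $R\Gamma(M_0)=0$ after applying the residual reduction. I would also note that the numerical hypotheses match on the nose: $\tilde w_i$ is the same in both settings, and the condition $|\lambda_i|\le\tilde w_i/2-v_i$ is exactly what is imposed; the source assumption on $i$ and $2v_i\le\tilde w_i$ are likewise shared.

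The one genuine subtlety, and the main obstacle I expect, is the passage from ``$R\Gamma_i$ kills all homology subquotients of a complex in the image of the local $F_i$'' (Lemma \ref{Lem:part_glob_van}) to ``$R\Gamma_i$ kills the particular simple $M_0$''. The complex $F_iN_0$ need not itself lie in the image of the local $F_i$ as a complex of modules over $\A_{\lambda_i}^{\theta_i}(v,i)$ without passing to the quotient category — one must argue that after lifting $N_0$ to $\tilde N_0$, the simple $M_0$ (viewed in the quotient) is a subquotient of some homology $H^k(F_i\tilde N_0)$, to which Lemma \ref{Lem:part_glob_van} does apply, and that $R\Gamma$ commutes with the quotient-category structure well enough that vanishing of $R\Gamma_i$ on all such subquotients in the big category forces vanishing of $R\Gamma(M_0)$ in the small one. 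This is essentially a diagram chase with Serre quotients plus the exactness of $R\Gamma_i$ on the relevant complexes, but it needs to be written carefully. The hypothesis that $N_0$ is supported on $\rho_{v-\epsilon_i}^{-1}(0)$ is used only to guarantee that $F_iN_0$ has homology supported on $\rho_v^{-1}(0)$ (so the various categories ``${}_{\rho^{-1}(0)}$'' are the natural home for everything), which is standard for Webster's functors; I would invoke that rather than reprove it.
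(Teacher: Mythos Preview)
Your overall plan---factor $R\Gamma$ through $R\Gamma_i$, exploit the compatibility of the global and local $F_i$ via Webster, and invoke Lemma \ref{Lem:part_glob_van}---is the right shape, but there is a genuine gap at the step you yourself flag as ``the one genuine subtlety''. Knowing that $R\Gamma_i(M)=0$ for some object $M$ in the intermediate category with $\pi^{\underline\theta}(v)(M)\cong M_0$ does \emph{not} yield $R\Gamma(M_0)=0$. The factorization reads
\[
R\Gamma(M_0)\;=\;\bigl(R\Gamma_i\circ R\pi^{\underline\theta}(v)^*(M_0)\bigr)^{\underline G},
\]
so what you must kill is $R\Gamma_i$ applied to $R\pi^{\underline\theta}(v)^*(M_0)$, not to an arbitrary lift $M$. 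The cone of the unit map $M\to R\pi^{\underline\theta}(v)^*(M_0)$ has homology in the Serre kernel of $\pi^{\underline\theta}(v)$, and there is no reason $R\Gamma_i$ should annihilate that. Your ``diagram chase with Serre quotients plus the exactness of $R\Gamma_i$'' does not close this gap.

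The paper supplies two ingredients you are missing. First, the support condition on $N_0$ is not just bookkeeping: via \cite[Corollary 6.2]{BL} and the main theorem of \cite{BL}, it makes $N_0$ regular holonomic, and then $F_iN_0$ is a direct sum of shifted simples. Hence $M_0$ is a \emph{direct summand} (up to shift) of $F_iN_0$, not merely a subquotient of some homology. This reduces the problem to showing $R\Gamma_i\circ R\pi^{\underline\theta}(v)^*\bigl(F_iN_0\bigr)=0$. Second, one needs an adjunction argument: from $\pi^{\underline\theta}(v-\epsilon_i)\circ E_i\cong E_i\circ\pi^{\underline\theta}(v)$ and the fact that $F_i$ is (up to shift) right adjoint to $E_i$, one gets $R\pi^{\underline\theta}(v)^*\circ F_i\cong F_i\circ R\pi^{\underline\theta}(v-\epsilon_i)^*$, so the target becomes $R\Gamma_i\circ F_i\bigl(R\pi^{\underline\theta}(v-\epsilon_i)^*N_0\bigr)$, which Lemma \ref{Lem:part_glob_van} kills. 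Without the semisimplicity input from \cite{BL} and this commutation of $F_i$ with the right adjoint, the argument does not go through.
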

\begin{proof}
Let $\pi^\theta(v):D(R)\operatorname{-mod}^{G,\tilde{\lambda}}\twoheadrightarrow
\A_\lambda^\theta(v)\operatorname{-mod}$ denote the quotient functor, it factors as $\pi^{\underline{\theta}}(v)\circ
\pi^{\theta_i}(v,i)$, where $\pi^{\theta_i}(v,i):D(R)\operatorname{-mod}^{G,\tilde{\lambda}}
\twoheadrightarrow \A^{\theta_i}_{\lambda_i}(v,i)\operatorname{-mod}^{\underline{G},\underline{\lambda}}\operatorname{-mod}$
and $\pi^{\underline{\theta}}(v):
\A^{\theta_i}_{\lambda_i}(v,i)\operatorname{-mod}^{\underline{G},\underline{\lambda}}\operatorname{-mod}
\rightarrow \A_\lambda^\theta(v)\operatorname{-mod}$ are the quotient functors. We also have derived analogs of these functors
(between the corresponding equivariant derived categories) to be denoted by the same letters.
The functors $\pi^\theta(v),\pi^{\underline{\theta}}(v),\pi^{\theta_i}(v,i)$ have derived right adjoints,
for example, $R\pi^\theta(v)^*$ lifts a complex of $\A_\lambda^\theta(v)$-modules to
that of $(G,\tilde{\lambda})$-equivariant $D_R|_{T^*R^{\theta-ss}}$-modules and then takes derived
global sections, compare with \cite[Lemma 4.6]{MN}. So we see that $R^j\Gamma(\bullet)=R^j\pi^\theta(v)^*(\bullet)^G$
for all $j$ (we pass to the cohomology because a priori the two derived functors
map to slightly different categories). So it is enough to prove that $R^j\pi^\theta(v)^*(M_0)^{\operatorname{GL}(v_i)}=0$.

From the description of $R\pi^\theta(v)^*$ (and similar descriptions of the other two derived functors),
we deduce that  $R\pi^\theta(v)^*(\bullet)^{\operatorname{GL}(v_i)}=R\Gamma_i\circ R\pi^{\underline{\theta}}(v)^*(\bullet)$.
So it suffices to show that $R\Gamma_i\circ R\pi^{\underline{\theta}}(v)^*(M_0)=0$.
By \cite[Corollary 6.2]{BL} and the main result
of \cite{BL}, $N_0$ is regular holonomic. By the proof of \cite[Corollary 6.2]{BL}, $F_i N_0$
is the direct sum of simple $\A_{\lambda_i}^{\theta_i}(v,i)$-modules with homological shifts. So it is sufficient to show
that $R\Gamma_i\circ R\pi^{\underline{\theta}}(v)^*\circ F_i(N_0)=0$.

Recall that $\pi^{\underline{\theta}}(v-\epsilon_i)\circ E_i\cong E_i\circ \pi^{\underline{\theta}}(v)$.
Also recall that (up to a homological shift) $F_i$ is right adjoint to $E_i$. So we have an isomorphism
$F_i\circ R\pi^{\underline{\theta}}(v-\epsilon_i)^*\cong  R\pi^{\underline{\theta}}(v)^*\circ F_i$
(up to a homological shift). So it suffices to show that $R\Gamma_i$ annihilates
$F_i\circ R\pi^{\underline{\theta}}(v-\epsilon_i)^*(N_0)$. But this follows from
Lemma \ref{Lem:part_glob_van}.
\end{proof}

\subsection{Absence of finite dimensional representations}
\begin{Lem}\label{Lem:no_fin_equiv}
Let $\theta$ be a generic stability condition. Then the following conditions are equivalent.
\begin{itemize}
\item[(a)] The algebra $\A_\lambda(v)$ has no finite dimensional representations.
\item[(b)] $R\Gamma(M)=0$ for any simple $\A_\lambda^\theta(v)$-module $M$
supported on $\rho_v^{-1}(0)$.
\end{itemize}
\end{Lem}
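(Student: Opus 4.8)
The plan is to prove the two implications separately; only $(b)\Rightarrow(a)$ has real content. The implication $(a)\Rightarrow(b)$ is formal: by the restriction property of $R\Gamma$ recalled above, for a simple $\A_\lambda^\theta(v)$-module $M$ supported on $\rho_v^{-1}(0)$ the complex $R\Gamma(M)$ lies in $D^b_{fin}(\A_\lambda(v)\operatorname{-mod})$, so each cohomology $H^j(R\Gamma(M))$ is a finite dimensional $\A_\lambda(v)$-module; under (a) these all vanish, hence $R\Gamma(M)=0$.

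For $(b)\Rightarrow(a)$ I would argue the contrapositive, starting with a dévissage reduction. A coherent $\A_\lambda^\theta(v)$-module supported on $\rho_v^{-1}(0)$ is holonomic --- its support is contained in the Lagrangian $\rho_v^{-1}(0)$ --- hence of finite length, cf. \cite{BL}. Therefore, if $R\Gamma$ annihilates every simple module supported on $\rho_v^{-1}(0)$, then, using the canonical truncation triangles and the finite-length filtrations together with the fact that $R\Gamma$ is triangulated, $R\Gamma(X)=0$ for all $X\in D^b_{\rho_v^{-1}(0)}(\A_\lambda^\theta(v)\operatorname{-mod})$. So it is enough to show that, whenever $\A_\lambda(v)$ has a nonzero finite dimensional module, there exists a single $X\in D^b_{\rho_v^{-1}(0)}(\A_\lambda^\theta(v)\operatorname{-mod})$ with $R\Gamma(X)\neq 0$.

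So let $N$ be a nonzero finite dimensional $\A_\lambda(v)$-module and put $X:=\operatorname{Loc}(N):=\A_\lambda^\theta(v)\otimes^L_{\A_\lambda(v)}N$, the derived localization, which is left adjoint to $R\Gamma$. There are three points. \emph{(i)} $\operatorname{Loc}(N)\neq 0$: for any $\mathcal{M}$ one has $\Hom_{D^b(\A_\lambda^\theta(v))}(\operatorname{Loc}(N),\mathcal{M})=\Hom_{D^b(\A_\lambda(v))}(N,R\Gamma(\mathcal{M}))$; if $\operatorname{Loc}(N)$ were zero this would vanish for all $\mathcal{M}$, and since $R\Gamma\colon D^b(\A_\lambda^\theta(v)\operatorname{-mod})\to D^b(\A_\lambda(v)\operatorname{-mod})$ is a quotient functor, in particular essentially surjective, for quantized quiver varieties (\cite{MN}), this would force $N=0$. \emph{(ii)} $\operatorname{Loc}(N)\in D^b_{\rho_v^{-1}(0)}$: the characteristic variety of $N$ inside $X(v)$ is conical, hence $\C^\times$-stable, and zero-dimensional since $N$ is finite dimensional, so it equals the fixed point $\{0\}$ of the contracting $\C^\times$-action on $X(v)$; as localization respects supports along $\rho_v$, the cohomology of $\operatorname{Loc}(N)$ is supported on $\rho_v^{-1}(0)$. \emph{(iii)} $R\Gamma(\operatorname{Loc}(N))\neq 0$: by adjunction $\Hom_{D^b(\A_\lambda(v))}(N,R\Gamma(\operatorname{Loc}(N)))=\Hom_{D^b(\A_\lambda^\theta(v))}(\operatorname{Loc}(N),\operatorname{Loc}(N))$, and the right hand side contains the identity morphism, which is nonzero by \emph{(i)}. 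Hence $X=\operatorname{Loc}(N)$ violates the conclusion of the dévissage step, and $(b)\Rightarrow(a)$ follows.

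The step I expect to be the main obstacle is \emph{(i)}: the fact that derived localization does not kill a nonzero finite dimensional module, equivalently that the global sections functor ``sees'' the finite dimensional representations of $\A_\lambda(v)$. The clean way to get this is to cite essential surjectivity of $R\Gamma$ for quantized quiver varieties as above; alternatively one can run an associated-graded argument, using that $\rho_v$ is a resolution of a variety with rational singularities --- so that $R\rho_{v\ast}\mathcal{O}_{X^\theta(v)}=\mathcal{O}_{X(v)}$ and hence $L\rho_v^\ast(\gr N)\neq 0$ --- together with a spectral sequence comparing $\gr\operatorname{Loc}(N)$ with $L\rho_v^\ast(\gr N)$. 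Everything else --- the formal implication $(a)\Rightarrow(b)$, the finite-length dévissage, and the conical-support computation in \emph{(ii)} --- is routine.
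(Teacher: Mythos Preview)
Your argument is correct and follows the same approach as the paper. The paper is more concise: rather than separately establishing your steps (i)--(iii), it notes directly that $L\operatorname{Loc}$ is a \emph{right inverse} to $R\Gamma$ (so $R\Gamma(L\operatorname{Loc}(N))\cong N$ in one stroke) and that both functors restrict to the supported/finite-dimensional subcategories, making $R\Gamma\colon D^-_{\rho^{-1}(0)}\twoheadrightarrow D^-_{fin}$ a quotient functor from which the equivalence follows at once.
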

\begin{proof}
We can view $R\Gamma$ as a functor $D^-(\A_\lambda^\theta(v)\operatorname{-mod})\rightarrow D^-(\A_\lambda(v)\operatorname{-mod})$.
This functor has a left adjoint and right inverse functor, namely
the derived localization functor $$L\operatorname{Loc}: D^-(\A_\lambda(v)\operatorname{-mod})
\rightarrow D^-(\A_\lambda^\theta(v)\operatorname{-mod}).$$ The functors $R\Gamma, L\operatorname{Loc}$
restrict to the subcategories $D^-_{\rho^{-1}(0)}(\A_\lambda^\theta(v)\operatorname{-mod})$,
$D^-_{fin}(\A_\lambda(v)\operatorname{-mod})$. So $R\Gamma$ is a quotient functor
$$D^-_{\rho^{-1}(0)}(\A_\lambda^\theta(v)\operatorname{-mod})\twoheadrightarrow
D^-_{fin}(\A_\lambda(v)\operatorname{-mod}).$$
Our claim follows.
\end{proof}

Now take $\theta$ with all positive coordinates, it is generic. The corresponding system of simple roots
for $\a^\lambda$ (as defined in \cite[6.1]{BL}) consists of positive roots. Moreover, the condition
in (1) of Theorem \ref{Thm:cycl_main} is equivalent to $|\langle \lambda,\alpha\rangle|\leqslant
(\omega_0,\alpha)/2$
for all simple roots $\alpha$ of $\a^\lambda$.

In \cite[6.1]{BL} we have constructed endofunctors $E_\alpha, F_\alpha$ of
$\bigoplus_v D^b(\A_{\lambda_v}^\theta(v)\operatorname{-mod})$ that preserve
$\bigoplus_v D^b_{\rho_v^{-1}(0)}(\A_{\lambda_v}^\theta(v)\operatorname{-mod})$
(we write $\lambda_v$ instead of $\lambda$, because the parameter depends on $v$,
see the previous subsection).
The construction goes as follows. Pick an element $\sigma$ in the Weyl group
$W(Q)$ of $Q$. Then we have a Lusztig-Maffei-Nakajima (LMN) isomorphism
$\sigma_*: X^{\theta}(v)\xrightarrow{\sim} X^{\sigma\theta}(\sigma\cdot v)$,
where we write $\sigma\cdot v$ for the dimension vector corresponding to
the weight $\sigma\nu$. This isomorphism lifts to that of quantizations:
$\sigma_*:\A_\lambda^\theta(v)\xrightarrow{\sim}\A_{\sigma\lambda}^{\sigma\theta}(\sigma\cdot v)$,
see \cite[2.2]{BL} (note that here we use the symmetrized quantum comoment map so the
parameter transforms linearly). The categories $\A_\lambda^\theta(v)\operatorname{-mod},
\A_{\lambda}^{\theta'}(v)\operatorname{-mod}$ are naturally equivalent as long as
$\theta,\theta'$ lie in the same Weyl chamber for $\a^\lambda$, see \cite[6.1]{BL}.
Now pick a simple root $\alpha$ and choose $\theta'$ inside the Weyl chamber for
$\a^{\lambda}$ containing $\theta$ and such that $\ker\alpha$ is a wall for the
$\g(Q)$-chamber of $\theta'$. So there is $\sigma\in W(Q)$ such that
$\sigma\alpha$ is a simple root for $\g(Q)$ and all entries of $\sigma\theta'$ are positive.
Then we set  $F_\alpha= \sigma_*^{-1}\circ F_i\circ \sigma_*, E_\alpha= \sigma_*^{-1}\circ E_i\circ \sigma_*$.

\begin{Prop}\label{Prop:abs_fin_dim}
Suppose (1) of Theorem \ref{Thm:cycl_main} holds. Then the algebra $\A_\lambda(n\delta)$
has no finite dimensional irreducible representations.
\end{Prop}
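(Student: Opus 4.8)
The plan is to use the categorical $\mathfrak{sl}_2$-actions from \cite{BL} together with the crystal structure to reduce the vanishing of $R\Gamma$ to Corollary \ref{Cor:glob_vanish}. By Lemma \ref{Lem:no_fin_equiv}, it suffices to show that $R\Gamma(M)=0$ for every simple $\A_\lambda^\theta(n\delta)$-module $M$ supported on $\rho^{-1}_{n\delta}(0)$, where $\theta$ is chosen with all coordinates positive. The main input is that the functors $E_\alpha, F_\alpha$ (for $\alpha$ a simple root of $\a^\lambda$) give a categorical action on $\bigoplus_v D^b_{\rho_v^{-1}(0)}(\A_{\lambda_v}^\theta(v)\operatorname{-mod})$, and in particular an $\mathfrak{sl}_2$-crystal on the set of simples supported on the relevant zero fibers, with raising/lowering operators $\tilde e_\alpha,\tilde f_\alpha$. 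A standard fact about such crystals is that every simple $M$ in a nonzero weight space is in the image of some $\tilde f_\alpha$, i.e. $M$ appears in the homology of $F_\alpha N$ for a simple $N$ in the adjacent dimension vector. So I would first argue that any simple supported on $\rho^{-1}_{n\delta}(0)$ with $n\geqslant 1$ lies in such an image — the dimension vector $n\delta$ corresponds to a nonzero (indeed non-highest) weight for $\a^\lambda$ unless the category is trivial, so there is a simple root $\alpha$ with $\tilde e_\alpha M \neq 0$, equivalently $M$ is a summand of the homology of $F_\alpha N$ with $N := \tilde e_\alpha M$.

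Next I would transfer the vanishing of $R\Gamma$ across the functor $F_\alpha$. By definition $F_\alpha = \sigma_*^{-1}\circ F_i\circ \sigma_*$ for a suitable $\sigma\in W(Q)$ making $\sigma\alpha =: i$ a simple root of $\g(Q)$ with $\sigma\theta$ having positive coordinates; the LMN isomorphisms $\sigma_*$ intertwine the global section functors on $X^\theta(v)$ and $X^{\sigma\theta}(\sigma\cdot v)$ since they cover isomorphisms of the affine varieties $X(v)\cong X(\sigma\cdot v)$. So $R\Gamma(M)=0$ is equivalent to $R\Gamma(\sigma_* M)=0$, and $\sigma_* M$ appears in the homology of $F_i(\sigma_* N)$ with $\sigma_* N$ a simple module supported on $\rho^{-1}_{\sigma\cdot v - \epsilon_i}(0)$ in dimension vector $\sigma\cdot(n\delta) - \epsilon_i$. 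Now I would apply Corollary \ref{Cor:glob_vanish}: its hypotheses are $2v_i\leqslant \tilde w_i$ and $|\lambda_i|\leqslant \tilde w_i/2 - v_i$ for the relevant dimension vector and source $i$. Translating, $\tilde w_i/2 - v_i = \tfrac12(\omega_0,\sigma\alpha) - \langle$ appropriate shift $\rangle$ and the inequality $|\langle\lambda,\alpha\rangle|\leqslant (\omega_0,\alpha)/2$ — which is exactly the restatement of condition (1) noted just before Proposition \ref{Prop:abs_fin_dim} — is designed to match. One must be careful about the shift built into the definition of $\tilde\lambda_i$ versus the symmetrized comoment map, and check that the inequality $2v_i\leqslant\tilde w_i$ holds automatically for the dimension vector one lands on after applying $\sigma$; I expect this is where a short direct computation with roots of $\a^\lambda$ is needed, using $\langle\lambda,\beta\rangle$ being an integer for $\beta$ a root of $\a^\lambda$ and the LMN combinatorics $\sigma\cdot v$.

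Putting these together: given a simple $M$ supported on $\rho^{-1}_{n\delta}(0)$, pick $\alpha$ with $\tilde e_\alpha M\neq 0$, write $\sigma\alpha = i$ a simple root of $\g(Q)$ with $\sigma\theta$ positive, and apply Corollary \ref{Cor:glob_vanish} to $\sigma_* M$ inside $F_i(\sigma_* \tilde e_\alpha M)$ to conclude $R\Gamma_i$ and hence $R\Gamma$ kills it; since $R\Gamma$ and $\sigma_*$-conjugation commute, $R\Gamma(M)=0$. By Lemma \ref{Lem:no_fin_equiv} this gives condition (a), i.e. $\A_\lambda(n\delta)$ has no finite dimensional representations. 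The main obstacle, I expect, is the bookkeeping of the two shifts — the one relating the non-symmetrized level $\tilde\lambda$ used in the $F_i$ construction to the symmetrized $\lambda$, and the Weyl-group twist $\sigma$ — so that the numerical hypotheses of Corollary \ref{Cor:glob_vanish} come out to be precisely condition (1) of Theorem \ref{Thm:cycl_main}. I would also need the (standard, but worth spelling out) fact that in a categorical $\mathfrak{sl}_2$-action the only simples not in the image of any $\tilde f_\alpha$ are highest-weight vectors, and that the highest weight space of $\bigoplus_v D^b_{\rho_v^{-1}(0)}(\A_{\lambda_v}^\theta(v))$ is $v=0$, so the argument bottoms out correctly.
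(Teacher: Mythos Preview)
Your overall strategy matches the paper's: reduce to $R\Gamma(M)=0$ via Lemma \ref{Lem:no_fin_equiv}; for a simple $M$ supported on $\rho^{-1}_{n\delta}(0)$, find a simple root $\alpha$ of $\a^\lambda$ with $M$ in the image of $\tilde f_\alpha$; transport through the LMN isomorphism $\sigma_*$ so that $\alpha$ becomes a simple root $\alpha_i$ of $\g(Q)$ with $\sigma\theta$ positive; and apply Corollary \ref{Cor:glob_vanish}. The paper confirms your expectation that condition (1) translates exactly into $|(\sigma\lambda)_i|\leqslant \tfrac12\tilde w_i'-v_i'$ (which already forces $2v_i'\leqslant\tilde w_i'$, so that side condition needs no separate check).

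The gap is in your justification for why such an $\alpha$ exists. Your claim that ``the highest weight space of $\bigoplus_v D^b_{\rho_v^{-1}(0)}(\A_{\lambda_v}^\theta(v))$ is $v=0$'' is neither standard nor, in general, true: when $\a^\lambda\subsetneq\g(Q)$ the $\a^\lambda$-crystal has highest-weight vertices at many dimension vectors $v\ne 0$, so ``$v\ne 0$ $\Rightarrow$ in the image of some $\tilde f_\alpha$'' fails. The paper does not argue this way. It invokes the main theorem of \cite{BL} (quoted as fact (ii)): the simples annihilated by \emph{all} $E_\alpha$ live at dimension vectors corresponding to \emph{extremal} weights for $\a^\lambda$, and then observes that $\omega_0-n\delta$ is not extremal for $\a^\lambda$. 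This is the genuinely deep input --- essentially the content of \cite{BL} --- and your proposal treats it as bookkeeping. Without it you cannot exclude a simple $M$ at $v=n\delta$ with $\tilde e_\alpha M=0$ for every simple root $\alpha$ of $\a^\lambda$, and for such an $M$ there is no $F_\alpha N$ to feed into Corollary \ref{Cor:glob_vanish}.
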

\begin{proof}
We will need the following facts from \cite{BL}:
\begin{itemize}
\item[(i)] For fixed $\alpha$, the functors $E_\alpha, F_\alpha$ define an $\mathfrak{sl}_2$-action on
$\bigoplus_{v} K_0(\A_{\lambda_v}^\theta(v)\operatorname{-mod}_{\rho_v^{-1}(0)})$. The classes of simples
form a dual perfect basis.
\item[(ii)] The simples annihilated by all $E_\alpha$ live in dimensions $v$ corresponding
to extremal weights.
\end{itemize}
(i) is in the proof of \cite[Proposition 6.3]{BL}, while (ii) is a consequence of the main theorem
of \cite{BL}, Theorem 1.1 there. (i) allows to define crystal operators on the set of simples
in $\bigoplus_v \A_{\lambda_v}^\theta(v)\operatorname{-mod}$. It also shows that if a simple in
$\A_{\lambda_v}^\theta(v)\operatorname{-mod}_{fin}$ does not lie in the image of $\tilde{f}_\alpha$,
then it is annihilated by $\tilde{e}_\alpha$ and hence by $E_\alpha$.

Now let $\sigma$ be as in the construction of $E_\alpha, F_\alpha$ and let $v':=\sigma\cdot v$. The condition
$|\langle \lambda,\alpha\rangle|\leqslant \frac{(\omega_0,\alpha)}{2}$ is easily seen to be equivalent
to $|(\sigma\lambda)_i|=|\langle \sigma\lambda,\alpha_i\rangle|\leqslant \frac{1}{2}(\sigma \nu,\alpha_i)$.
The latter is nothing else but $\frac{1}{2}\tilde{w}_i'-v_i'$, where $\tilde{w}_i'$ is constructed from
$v'$. By Corollary \ref{Cor:glob_vanish}, the functor $R\Gamma$ annihilates all simple $\A_\lambda^\theta(n\delta)$-modules lying in the image of $\tilde{f}_\alpha$.

So, if $R\Gamma(M)\neq 0$, then $E_\alpha M=0$ for all simple roots $\alpha$ of $\a^\lambda$. However,
this is impossible by (ii): the weight $\omega_0-n\delta$ cannot be extremal for $\a^\lambda$.
\end{proof}

\subsection{Proof of the main theorem}
We need to show that, under  conditions (1) and (2) of Theorem \ref{Thm:cycl_main},
there are no finite dimensional $e_{W'}H_c(W')e_{W'}$-modules for any parabolic subgroup
$W'\subset W=G(\ell,1,n)$. The parabolic subgroups of $W$ have the form $G(\ell,1,n_0)\times \mathfrak{S}_{n_1}
\times\ldots\times \mathfrak{S}_{n_k}$, where $n_0+n_1+\ldots+n_k\leqslant n$. The slice algebras are of the
form  $\A_\lambda(n_0\delta)\otimes \A_{\kappa}(n_1)\ldots\otimes \A_{\kappa}(n_k)$,
where we write $\A_{\kappa}(n_i)$ for the spherical subalgebra in $H_{\kappa}(\mathfrak{S}_{n_i})$.
This follows from \cite[5.4]{BL}. If (1) holds, then $\A_\lambda(n_0\delta)$ has no finite dimensional
irreducible representations (when $n_0>0$). If (2) holds, then the algebras $\A_{\kappa}(n_i)$ do not.
This completes the proof of the theorem.

\begin{Rem}\label{Rem:gener}
One can state a sufficient condition for the algebra $\A_\lambda(v)$ to be simple generalizing Theorem \ref{Thm:cycl_main}
for an arbitrary quiver of finite or affine type thanks to results of \cite{BL},\cite{Gies}. We are not going
to do this explicitly.
\end{Rem}

\end{document}